\numberwithin{equation}{section}
\theoremstyle{plain}
\newtheorem{theorem}{Theorem}[section]
\newtheorem{lemma}[theorem]{Lemma}
\newtheorem{proposition}[theorem]{Proposition}
\newtheorem{corrolarry}[theorem]{Corollary}
\theoremstyle{definition}
\newtheorem{definition}{Definition}[section]
\theoremstyle{remark}
\newtheorem*{remark}{Remark}
\newcommand{\norm}[1]{\left\|#1\right\|}
\newcommand{\abs}[1]{\left\vert#1\right\vert}
\newcommand{\spr}[1]{\left\langle\,#1\,\right\rangle}
\newcommand{\kl}[1]{\left(#1\right)}
\newcommand{\Kl}[1]{\left\{#1\right\}}
\newcommand{\R}{\mathbb{R}} 
\newcommand{\C}{\mathbb{C}}
\newcommand{\Z}{\mathbb{Z}}
\newcommand{\OA}{{\Omega_A}}
\newcommand{\Ol}{{\Omega_l}}
\newcommand{\OT}{{\Omega_T}}
\newcommand{\OTcl}{{\cl \, \Omega_T}}
\newcommand{\OTclg}{{\clg \, \Omega_T}}
\newcommand{\OAaghl}{{\OA(\aghl)}}
\newcommand{\Lt}{{L_2}}
\newcommand{\LtOl}{{L_2(\Ol)}}
\newcommand{\LtOT}{{L_2(\OT)}}
\newcommand{\LtOTcl}{{L_2(\OTcl)}}
\newcommand{\LtOTclg}{{L_2(\OTclg)}}
\newcommand{\LtOA}{{L_2(\OA)}}
\newcommand{\jk}{{jk}}
\newcommand{\ag}{\alpha_g}
\newcommand{\agx}{\alpha_g^x}
\newcommand{\agy}{\alpha_g^y}
\newcommand{\aghl}{\ag h_l}
\newcommand{\vphi}{\varphi}
\newcommand{\cl}{{c_{l}}}
\newcommand{\clg}{c_{l,g}}
\newcommand{\hLGS}{h_{\text{LGS}}}
\newcommand{\GLGS}{G_{\text{LGS}}}
\newcommand{\GNGS}{G_{\text{NGS}}}
\newcommand{\citeNR}{\cite{Neubauer_Ramlau_2017}\,}
\newcommand{\At}{\tilde{A}}
\newcommand{\wjk}{w_{jk}}
\newcommand{\wjkl}{w_{jk,l}}
\newcommand{\wjklg}{w_{jk,lg}}
\newcommand{\wjklgt}{\tilde{w}_{jk,lg}}
\newcommand{\om}{\omega}
\newcommand{\psit}{\tilde{\psi}}
\newcommand{\pl}{\phi_l}
\newcommand{\pjk}{\phi_{\jk}}
\newcommand{\pjkl}{\phi_{\jk,l}}
\newcommand{\pjklg}{\phi_{\jk,lg}}
\newcommand{\Ml}{{M_l}}
\newcommand{\vpjkg}{\vphi_{\jk,g}}
\newcommand{\vpjk}{\vphi_{\jk}}
\newcommand{\et}{\tilde{e}}
\newcommand{\lt}{{\ell_2}}
\newcommand{\Atjk}{\At_{jk}}
\newcommand{\ujkn}{u_{jk,n}}
\newcommand{\ujkm}{u_{jk,m}}
\newcommand{\vjkn}{v_{jk,n}}
\newcommand{\vjkm}{v_{jk,m}}
\newcommand{\sjkn}{\sigma_{jk,n}}
\newcommand{\sjkg}{\sigma_{jk,g}}
\newcommand{\rjk}{r_{jk}}
\newcommand{\AtD}{\At^\dagger}
\newcommand{\AD}{\mathcal{A}}
\newcommand{\phiD}{\phi^\dagger}
\newcommand{\Ajk}{A_{jk}}
\newcommand{\D}{\mathcal{D}}
\newcommand{\Olm}{\Omega_{l,m}}
\newcommand{\en}{\vec{e}_n}
\newcommand{\Nlg}{N_{l,g}}
\newcommand{\wjklgm}{w_{jk,lgm}}
\newcommand{\pjklgm}{\phi_{\jk,lgm}}
\newcommand{\pjkg}{\phi_{jk,g}}
\newcommand{\Ajkg}{A_{\jk,g}}
\newcommand{\Ft}{\tilde{F}}
\title{A Frame Decomposition of the Atmospheric Tomography Operator}
\author{
Simon Hubmer\footnote{Johann Radon Institute Linz, Altenbergerstra{\ss}e 69, A-4040 Linz, Austria, (simon.hubmer@ricam.oeaw.ac.at), Corresponding author.} ,
Ronny Ramlau\footnote{Johannes Kepler University Linz, Institute of Industrial Mathematics, Altenbergerstra{\ss}e 69, A-4040 Linz, Austria, (ronny.ramlau@jku.at)} \footnote{Johann Radon Institute Linz, Altenbergerstra{\ss}e 69, A-4040 Linz, Austria, (ronny.ramlau@ricam.oeaw.ac.at)}
}
\begin{document}

\maketitle

\begin{abstract}
We consider the problem of atmospheric tomography, as it appears for example in adaptive optics systems for extremely large telescopes. We derive a frame decomposition, i.e., a decomposition in terms of a frame, of the underlying atmospheric tomography operator, extending the singular-value-type decomposition results of \citeNR by allowing a mixture of both natural and laser guide stars, as well as arbitrary aperture shapes. Based on both analytical considerations as well as numerical illustrations, we provide insight into the properties of the derived frame decomposition and its building blocks. 

\smallskip
\noindent \textbf{Keywords.} Adaptive Optics, Atmospheric Tomography, Singular Value Decomposition, Frame Decomposition, Inverse and Ill-Posed Problems
\end{abstract}


\section{Introduction}

The imaging quality of earthbound astronomical telescopes like the Extremely Large Telescope (ELT) \cite{ELT_2020} of the European Southern Observatory (ESO), currently under construction in the Atacama desert in Chile, suffers from aberrations due to turbulences in the atmosphere, which result in blurred images. This is commonly counteracted by the use of \emph{Adaptive Optics} (AO) systems, which use the measurements of one or more \emph{Wavefront Sensors} (WFS) to suitably adjust \emph{Deformable Mirrors} (DMs) in such a way that the incoming wavefronts are corrected (flattened) after reflection on the mirrors; see Figure~\ref{fig_AO_system} (left). Since the atmosphere is constantly changing, this has to be done in real time. For more details on adaptive optics we refer to \cite{Roddier_1999,Roggemann_Welsh_1996,Ellerbroek_Vogel_2009}. 

There are a number of different types of AO systems, the simplest one being \emph{Single Conjugate Adaptive Optics} (SCAO). Thereby, the light of a so-called \emph{Natural Guide Star} (NGS), a bright star in the vicinity of an object of interest, is used to adjust the DM to obtain a corrected wavefront and thus a sharp image of the NGS and the nearby object. See Figure~\ref{fig_AO_system} (right) for a schematic drawing of an SCAO system. 

\begin{figure}
	\centering
	\includegraphics[width=0.45\textwidth]{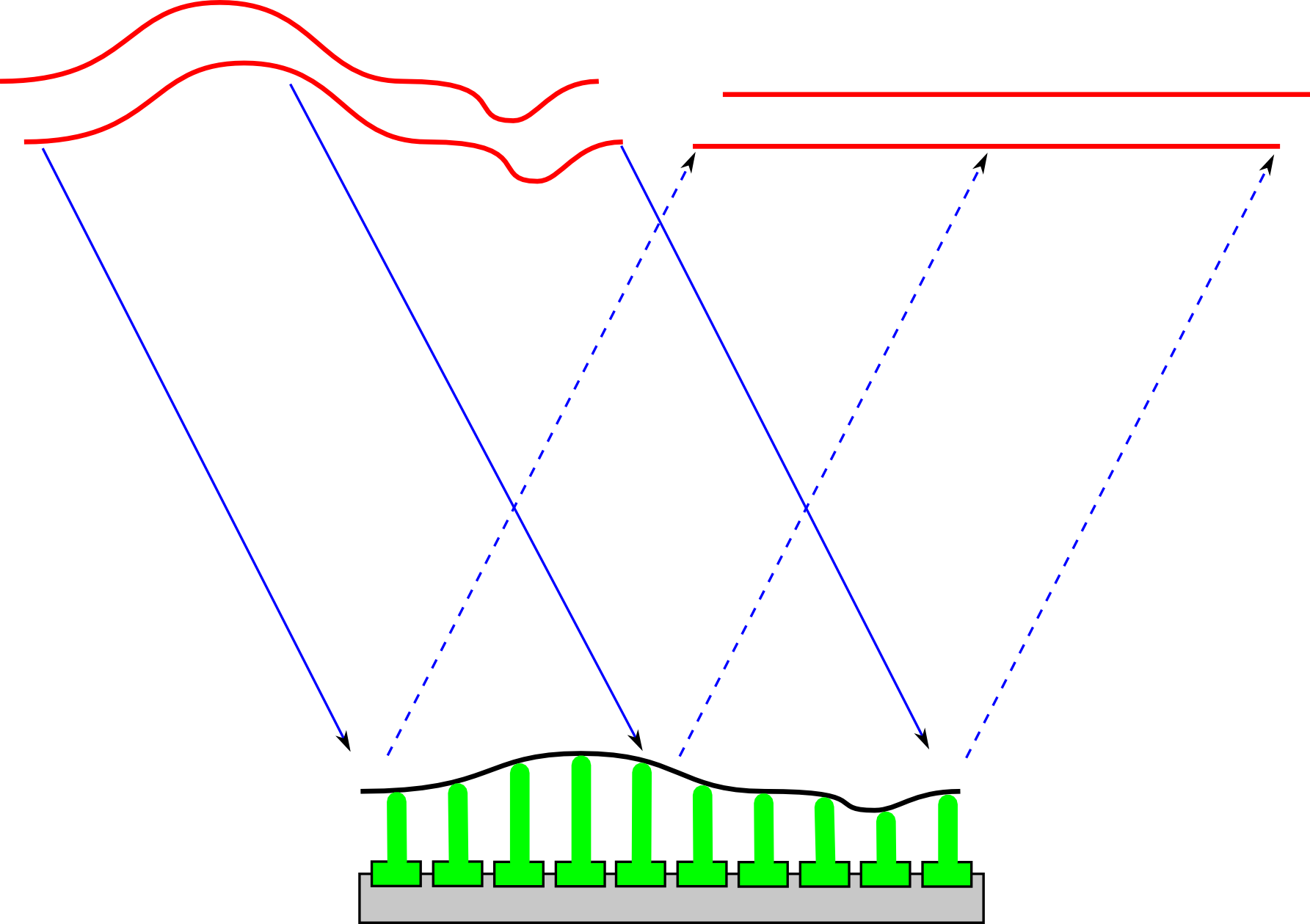}
	\qquad\qquad
	\includegraphics[width=0.40\textwidth, trim={6cm 0cm 6cm 0cm}, clip ]{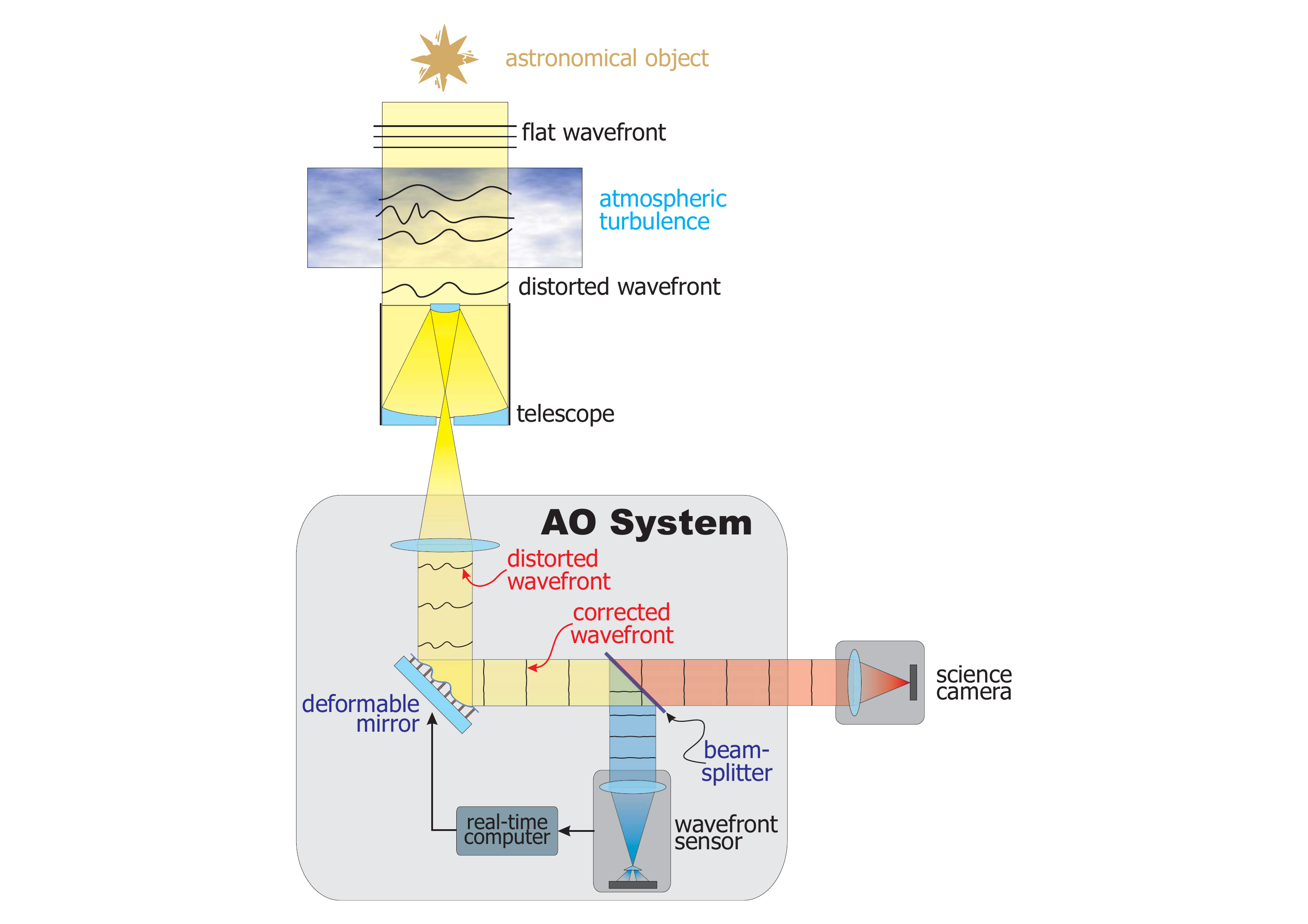}
	\caption{Correction of an incoming wavefront by a deformable mirror (left, image taken from \cite{Auzinger_2017}) and sketch of an SCAO system (right, image taken from \cite{Egner_2006}).}
	\label{fig_AO_system}
\end{figure}

In case that there is no bright enough NGS available in the vicinity of the object of interest, or if one wants to achieve a good correction over either a larger or multiple fields of view, one needs to resort to different, more complex AO systems. A good unidirectional correction in the absence of a NGS in the vicinity of an object of interest is for example achieved by \emph{Laser Tomography Adaptive Optics} (LTAO), while a correction over a large or multiple fields of view is achieved by \emph{Multiconjugate Adaptive Optics} (MCAO) and \emph{Multiobject Adaptive Optics} (MOAO), respectively \cite{Diolaiti_et_al_2016,Andersen_Eikenberry_Fletcher_Gardhuose_Leckie_Veran_Gavel_Clare_Jolissaint_Julian_Rambold_2006,Hammer_Sayede_Gendron_Fusco_Burgarella_Cayatte_Conan_Courbin_Flores_Guinouard_2002,Puech_Flores_Lehnert_Neichel_Fusco_Rosati_Cuby_Rousset_2008,Rigaut_Ellerbroek_Flicker_2000}. These are schematically depicted in Figure~\ref{fig_AO_systems}. In common with all of those different AO systems is the use of so-called \emph{Laser Guide Stars} (LGS), artificial stars created by powerful laser beams in the sodium layer of the atmosphere, which are used to increase the number of guide stars available for correction and thus enhance the imaging quality.

\begin{figure}
	\centering
	\includegraphics[width=1\textwidth, clip, trim={0cm 0cm 0cm 0cm}]{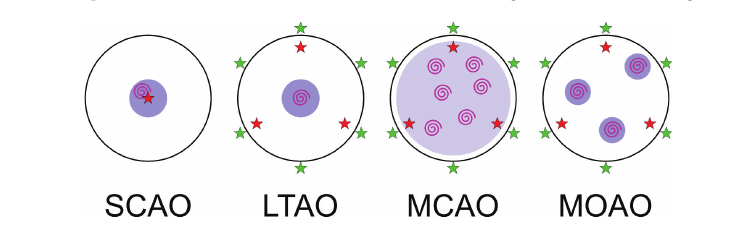}
	\caption{Different fundamental types of AO systems. Magenta spirals stand for astronomical objects of interest, and red and green stars for (the location of) NGS and LGS, respectively. The light blue areas correspond to those directions of view, for which the AO systems aim at achieving a correction. Image taken from \cite{Auzinger_2017}.}
	\label{fig_AO_systems}
\end{figure}

Since an SCAO system succeeds at enhancing the imaging quality in one direction of view only, measurements from a single WFS are enough to compute suitable correction shapes of the DM, because wavefront aberrations from two objects close to each other, in this case from the reference NGS and the considered object of interest, are approximately the same. However, this is not the case for LTAO, MCAO, and MOAO, where the NGS and LGS are far away from the object of interest, or a good correction has to be achieved over a large or multiple fields of view. Hence, one has to use multiple WFSs (typically one for each guide star) and DMs, whose correcting shapes have to be determined from the turbulence profile of the atmosphere, which in turn has to be calculated from the WFS measurements. This gives rise to the problem of the atmospheric tomography. 

Unfortunately, and in particular since the separation of the NGSs and LGSs is low (e.g., $1$ arcmin for MCAO and $3.5$ arcmin for MOAO), the problem of atmospheric tomography falls into the category of limited-angle tomography, which is known to be a severely ill-posed problem \cite{Davison_1983, Natterer_2001}. In addition, the number of available guide stars is relatively small as well (e.g., $6$ LGSs for the ELT), which in combination with the severe ill-posedness makes the reconstruction of the full atmospheric turbulence above the telescope a hopeless endeavour. Hence, one works with the commonly accepted assumption that the atmosphere contains only a limited number of turbulent layers, which are infinitely thin and located at predetermined heights. The problem of atmospheric tomography then becomes the task of reconstructing the turbulences on only a finite number of turbulence layers from the available WFS measurements. For a schematic depiction with three layers, (natural) guide stars, and WFSs see Figure~\ref{fig_AO_Tomo} (left).

\begin{figure}
	\centering
	\includegraphics[width=0.45\textwidth]{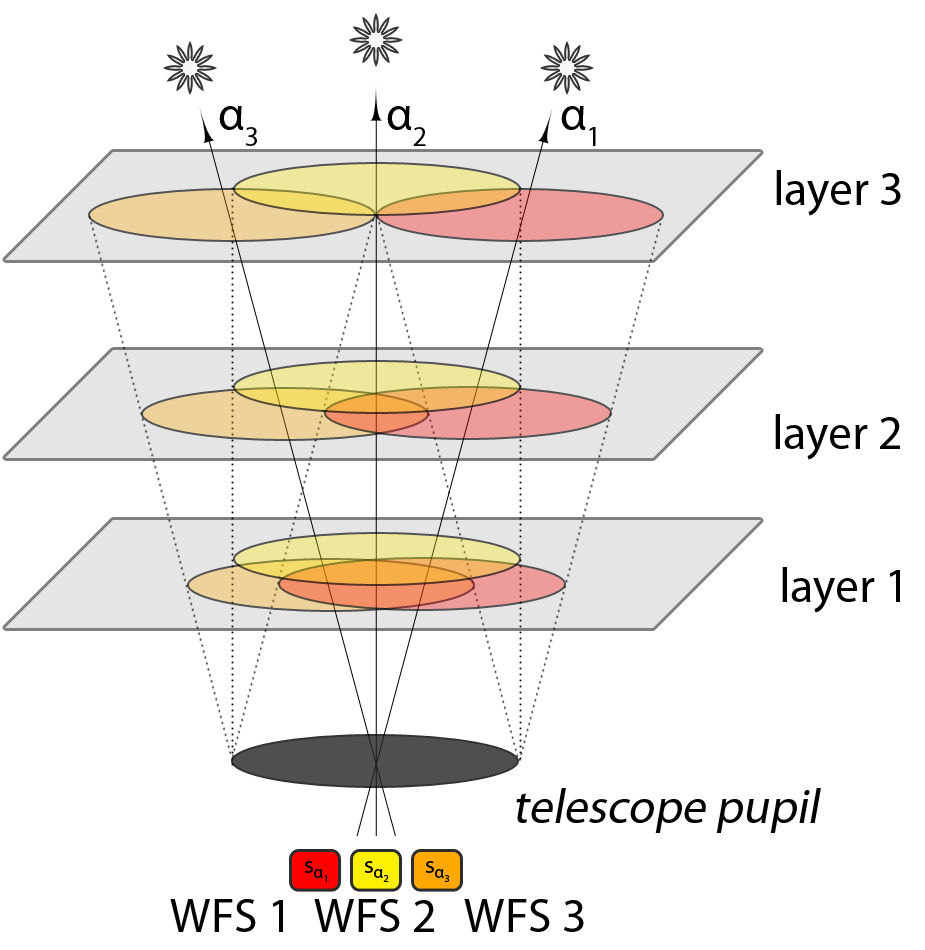}
	\qquad
	\includegraphics[width=0.45\textwidth]{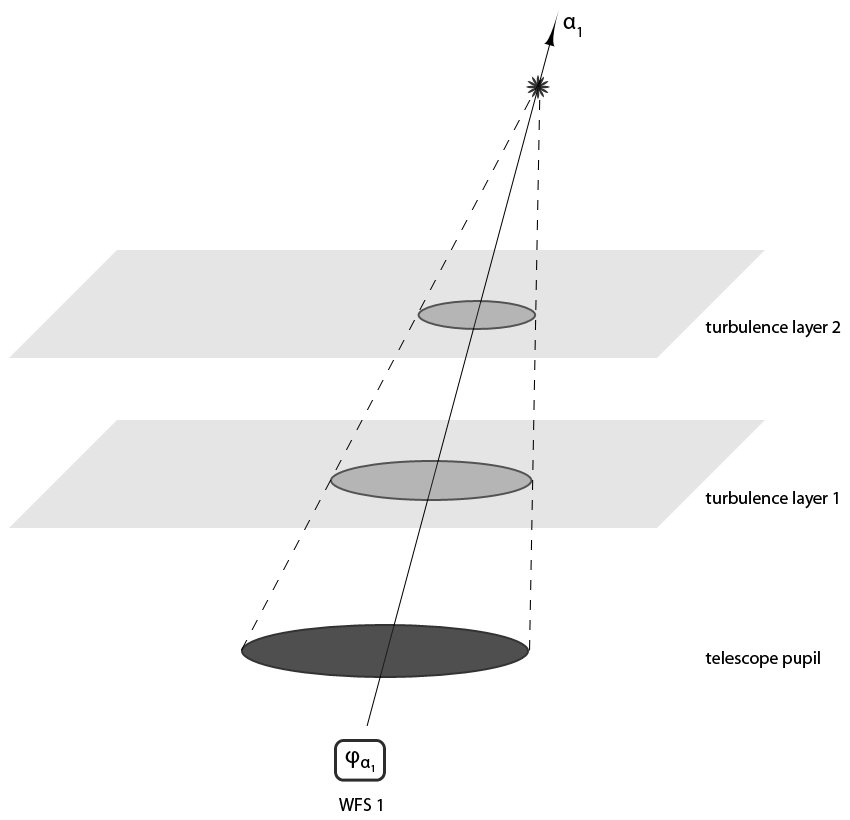}
	\caption{Schematic depiction of an atmospheric tomography setup with three turbulence layers, (natural) guide stars, and wavefront sensors (left). The coloured areas are those parts of the turbulence layers which are ``seen'' by each of the different wavefront sensors. Illustration of the cone effect (right) for a single LGS. Images taken from \cite{Yudytskiy_2014}.}
	\label{fig_AO_Tomo}
\end{figure}

A number of numerical reconstruction approaches have been proposed and developed for the atmospheric tomography problem, among them a minimum mean square error method \cite{Fusco_Conan_Rousset_Mugnier_Michau_2001}, a back-projection
algorithm \cite{Gavel_2004}, conjugate gradient type
iterative reconstruction methods with suitable preconditioning \cite{Ellerbroek_Gilles_Vogel_2003,Gilles_Ellerbroek_Vogel_2003,Gilles_Ellerbroeck_2008,Yang_Vogel_Ellerbroek_2006,Vogel_Yang_2006}, the Fractal Iterative Method (FrIM) \cite{Tallon_TallonBosc_Bechet_Momey_Fradin_Thiebaut_2010,Tallon_Bechet_TallonBosc_Louarn_Thiebaut_Clare_Marchetti_2012,Thiebaut_Tallon_2010}, the Finite Element Wavelet Hybrid Algorithm (FEWHA) \cite{Yudytskiy_2014,Yudytskiy_Helin_Ramlau_2013,Yudytskiy_Helin_Ramlau_2014,Helin_Yudytskiy_2013}, and Kaczmarz iteration \cite{Ramlau_Rosensteiner_2012, Rosensteiner_Ramlau_2013}; see also \cite{Ellerbroek_Gilles_Vogel_2002,Gilles_Ellerbroek_Vogel_2002,Gilles_Ellerbroek_Vogel_2007,Poettinger_Ramlau_Auzinger_2019,Raffetseder_Ramlau_Yudytskiy_2016,Ramlau_Obereder_Rosensteiner_Saxenhuber_2014,Saxenhuber_Ramlau_2016} and the references therein. All of these methods work comparatively well, each with its own peculiar advantages and drawbacks, and the resulting reconstructions have been successfully used to enhance the overall imaging quality of the corresponding AO systems. However, these numerical approaches themselves do not provide any deeper insight into the atmospheric tomography problem itself.

Hence, the authors of \citeNR set out to provide a mathematical analysis of the atmospheric tomography operator (defined below) underlying the problem, which is derived from the Radon transform using the layered and limited-angle structure of the problem \cite{Ellerbroek_Vogel_2009,Fusco_Conan_Rousset_Mugnier_Michau_2001}. In particular, they derived a singular-value-type decomposition of the operator, which not only provides the basis for efficient numerical reconstruction methods but also allowed to gain insight into the ill-posedness of the problem itself. In contrast to the already known singular-value decomposition of the limited-angle tomography operator \cite{Davison_1983,Natterer_2001}, the singular values of this decomposition could be computed explicitly.

However, the singular-value-type decomposition derived in \citeNR is only valid under a very restrictive set of assumptions. In particular, leaving aside some technicalities, it is only valid for square aperture shapes and tomography settings with only NGSs and no LGSs. This is obviously problematic for two reasons: firstly, the aperture shapes of telescopes is usually not square, and secondly, as we have already seen above, most of the AO systems which rely on atmospheric tomography include LGSs as an integral part of their design. Hence, many of the interesting theoretical results derived in \citeNR no longer hold for those (practically) important cases. Furthermore, while numerical routines based on this singular-value-type decomposition can in principle be adapted via measurement extension to (partly) circumvent the restriction of square aperture shapes, an adaption to include also LGSs is not possible in any straightforward way. This is mainly due to the so-called \emph{cone effect}: since a LGS is created by a powerful laser beam inside the sodium layer of the atmosphere, it is, in contrast to an ``infinitely far away'' NGS, located at a finite height. Thus, light travelling from the LGS to the telescope pupil passes through larger areas in lower atmospheric layers than in higher ones; see Figure~\ref{fig_AO_Tomo} (right) for an illustration. Mathematically, this results in the addition of a layer and guide star dependent scaling parameter in the atmospheric tomography operator (see below), which causes a number of complications.

The aim of this paper is to overcome the restrictions of the singular-value-type decomposition in \citeNR of the atmospheric tomography operator noted above. In particular, we want to find a decomposition which allows both LGSs and arbitrary aperture shapes. This is done in two steps: First, we consider the case of a tomography setup with only LGSs and no NGSs. Setting aside for the moment considerations on the practicality of such a setup (e.g., the tip/tilt problem), this is at the same time both a completion and a natural extension of the results of \citeNR, and an ideal starting point for deriving the main results of this paper. Then, we provide a decomposition of the atmospheric tomography operator for general problem settings including both a mixture of NGSs and LGSs, as well as arbitrary aperture shapes. This decomposition is done in terms of a set of functions which form a frame, with important implications for both theoretical as well as numerical aspects of the tomography problem.

The outline of this paper is as follows. In Section~\ref{sect_mathematical_setting} we describe the precise mathematical setting of atmospheric tomography considered in this paper, and in Section~\ref{sect_Frames} we review some necessary material on frames in Hilbert spaces. In Section~\ref{sect_SVD} we derive the decompositions for the atmospheric tomography operator mentioned above, first in the case of square domains and LGSs only, and then for a mixture of NGSs and LGSs as well as arbitrary aperture shapes. Section~\ref{sect_numerical_results} presents some numerical results based on the obtained analytical derivations and is followed by a short conclusion in Section~\ref{sect_conclusion}.

\section{Mathematical Setting}\label{sect_mathematical_setting}

In this section, we describe the precise mathematical setting of the atmospheric tomography problem considered in this paper. For the sake of consistency, we mainly use the same notations as in \citeNR.

The atmospheric tomography problem is a limited-angle tomography problem with only finitely many directions of view $\ag$, $g=1,\dots,G$, where $G$ denotes the total number of NGSs and LGSs. The directions $\ag \in \R^2$ are such that if $(\agx,\agy,1) \in \R^3$ points from the center of the telescope to the guide star $g$, then $\ag= (\agx,\agy) \in \R^2$. We denote the telescope aperture with $\OA \subset \R^2$ and assume that the atmosphere contains $L$ layers, where each layer is a plane at height $h_l$, $l=1,\dots,L$ parallel to $\OA$.

Since we do not only consider NGSs but also LGSs in this paper, we need to, in particular, take the cone effect into account (see above). For this, we need to define the parameters $\clg$. Assuming that $\GNGS$ and $\GLGS$ denote the number of NGSs and LGSs, respectively, such that $G = \GNGS + \GLGS$, we set
	\begin{equation*}
		\clg
		=
		\begin{cases}
			1 \,, & g \in \{1,\dots,\GNGS\} \,,
			\\
			1- \frac{h_l}{\hLGS} \,, & g \in \{\GNGS+1,\dots,\GNGS + \GLGS\} \,,
		\end{cases}
	\end{equation*}
where $\hLGS$ denotes the height of the LGSs. Since $h_l < \hLGS$ for all $l=1,\dots,L$, we have that $\clg \leq 1$. Furthermore, since we assume that the $h_l$ are in ascending order and that $h_L < \hLGS$, we have that $\clg \geq 1 - \tfrac{h_L}{\hLGS} > 0$.

For every layer $l$, we can now define the domains
	\begin{equation*}
	\begin{split}
		\Ol := \bigcup\limits_{g = 1}^G  \OAaghl \,,
	\end{split}
	\end{equation*}
where
	\begin{equation*}
	\begin{split}
		\OAaghl := \Kl{r \in \R^2 \, : \, \frac{r - \aghl}{\clg}  \in \OA} \,.
	\end{split}
	\end{equation*}
The domains $\Ol$ are exactly those parts of the layers which are ``seen'' by the wavefront sensors (compare with Figure~\ref{fig_AO_Tomo}) and are therefore those parts of the atmosphere on which one can expect to reconstruct the atmospheric turbulences.

Denoting by $\phi = (\phi_l)_{l=1,\dots,L}$ the turbulence layers and by $\vphi = (\vphi_g)_{g=1,\dots,G}$ the incoming wavefronts, the atmospheric tomography operator can now by defined as follows:
	\begin{equation}\label{def_A}
	\begin{split}
		A \, : \, \D(A) :=&\prod\limits_{l=1}^L \LtOl \to \LtOA^G \,,
		\\
		\phi &\mapsto \vphi_g = (A\phi)_g(r)
		:=
		\sum\limits_{l=1}^{L}\phi_l( \clg r+\aghl)  \,,
		\qquad g = 1,\dots,G \,,
	\end{split}
	\end{equation}
where $\LtOA^G$ denotes the cartesian product space $\prod_{g=1}^G \LtOA$. On the definition and image spaces of $A$ we define the canonic inner products
	\begin{equation}\label{def_spr}
		\spr{\phi,\psi} := \sum\limits_{l=1}^{L} \spr{\phi_l,\psi_l}_\LtOl \,,
		\qquad\spr{\vphi,\psi} := \sum\limits_{g=1}^{G} \spr{\vphi_g,\psi_g}_\LtOA \,.
	\end{equation}
Completely analogous to \cite[Theorem~3.1]{Neubauer_Ramlau_2017}, we have that the operator $A$ with respect to the above scalar products is not compact, and hence, a singular system does not necessarily need to exist for $A$.

However, the authors of \citeNR managed to derive a singular-value-type decomposition of what they called the \emph{periodic} atmospheric tomography operator $\At$, given by
	\begin{equation}\label{def_At}
	\begin{split}
		\At \, : \,  \LtOT^L &\to \LtOT^G \,,
		\\
		\phi &\mapsto \vphi_g = (\At\phi)_g(r)
		:=
		\sum\limits_{l=1}^{L}\phi_l( r+\aghl)  \,,
		\qquad g = 1,\dots,G \,,
		\end{split}
	\end{equation}
where $\OT := [-T,T]^2$ with $T$ sufficiently large, such that
	\begin{equation*}
		\OA + \aghl \subset \OT \,,
		\qquad
		g = 1, \dots,G \,,
		\quad
		l = 1,\dots,L \,,
	\end{equation*}
and under the assumption that functions in $\LtOT$ are periodic. This specific extension of the operator $A$ for the case of only NGSs allowed the use (of the special properties) of the functions
	\begin{equation}\label{def_wjk}
		\wjk(x,y) := \frac{1}{2 T}e^{ij\om x}e^{ik\om y} \,,
		\qquad
		\om := \frac{\pi}{T} \,,
	\end{equation}
which form an orthonormal basis of $\LtOT$. However, the derived decomposition introduces artefacts due to the periodicity assumptions and in particular does not cover the case of LGSs and mixtures of NGSs and LGSs. Furthermore, in practice, wavefront measurements are not given on the extended domain $\OT$ but on $\OA$ only. Thus, for applying the decomposition derived in \citeNR, these measurements have to be extrapolated to $\OT$ in some way, which is also not desirable.

Hence, in Section~\ref{sect_SVD}, we extend the singular-value-type decomposition from \citeNR by deriving a frame decomposition of the original operator $A$, based on the ideas of \citeNR but avoiding some of their shortcomings. For this, we first review some necessary materials on frames in Hilbert spaces below.

\section{Frames in Hilbert Spaces}\label{sect_Frames}

For the upcoming analysis, we need to recall some known results on frames in Hilbert spaces, which can for example be found in \cite{Daubechies_1992}. First, recall the definition of a frame:

\begin{definition}\label{def_frame}
A sequence $\{e_k\}_{k \in K}$ for some countable index set $K$ in a Hilbert space $H$ is a frame, if and only if there exist numbers $B_1,B_2 > 0$ such that for all $f \in H$ we have
	\begin{equation}\label{eq_frame}
		B_1 \norm{f}_H^2 \leq \sum\limits_{k \in K} \abs{\spr{f,e_k}_H}^2 \leq B_2 \norm{f}_H^2 \,.
	\end{equation}
The numbers $B_1,B_2$ are called \emph{frame bounds}. The frame is called \emph{tight} if $B_1 = B_2$.
\end{definition}

Now, for a given frame $\{e_k\}_{k\in K}$, one can consider the so-called \emph{frame (analysis) operator} $F$ and its adjoint $F^*$, or \emph{synthesis-operator}, defined by
	\begin{equation}\label{def_F_Fad}
	\begin{split}
		&F \, : \, H \to \lt(K) \,, \qquad
		f \mapsto \Kl{\spr{f,e_k}}_{k \in K} \,,
		\\ 
		&F^* \, : \, \lt(K) \to H \,, \qquad
		a_k \mapsto \sum\limits_{k \in K} a_k e_k  \,.
	\end{split}	
	\end{equation}
Due to \eqref{eq_frame} and the general fact that $\norm{F} = \norm{F^*} $, there holds that
	\begin{equation}\label{eq_bound_F_Fadj}
		\sqrt{B_1} \leq \norm{F} = \norm{F^*} \leq \sqrt{B_2} \,.
	\end{equation}
Furthermore, one can define the operator $S := F^*F$, i.e.,
	\begin{equation*}
		S f := \sum\limits_{k\in K} \spr{f,e_k} e_k \,,
	\end{equation*}
and it follows that $S$ is a bounded linear operator with $B_1 I \leq S \leq B_2 I$, where $I$ denotes the identity operator. Furthermore, $S$ is invertible and $B_2^{-1} I \leq S^{-1} \leq B_1^{-1} I$. It follows that if one defines $\et_k := S^{-1}e_k$, then the set $\{\et_k\}_{k\in K}$ also forms a frame, with bounds $B_2^{-1},B_1^{-1}$, which is called the \emph{dual frame} of $\{e_k\}_{k \in K}$. It is also known that every $f \in H$ can be written in the form
	\begin{equation}\label{eq_frame_rec}
		f = \sum\limits_{k\in K} \spr{f, \et_k} e_k
		= \sum\limits_{k\in K} \spr{f, e_k} \et_k \,.
	\end{equation}

It is not always possible to compute the dual frame $\et_k$ explicitly. However, since it holds that \cite{Daubechies_1992}
	\begin{equation}\label{eq_dual}
		\et_k = \frac{2}{A+B} \sum\limits_{j=0}^{\infty} R^j e_j \,,
	\end{equation}
where $R := I - \frac{2}{A+B} S$, the elements of the dual frame can be approximated by only summing up to a finite index $N$, i.e.,
	\begin{equation}\label{eq_dual_N}
		\et_k^N = \frac{2}{A+B} \sum\limits_{j=0}^{N} R^j e_j \,.
	\end{equation}
The induced error of this approximation is controlled by the frame bounds $A,B$, i.e.,
	\begin{equation}\label{eq_dual_approx_error}
		\norm{f - \sum\limits_{k \in K} \spr{f,e_k}\et_k^N} \leq \kl{\frac{B-A}{B+A}}^{N+1}\norm{f}\,.
	\end{equation}
For a numerical implementation, \eqref{eq_dual_N} can also be written in the following recursive form, which allows for an efficient numerical implementation in practice
	\begin{equation}\label{eq_dual_N_approx}
		\et_k^N = \frac{2}{A+B}e_k + R \et_k ^{N-1}\,,
	\end{equation}

Although for frames the decomposition of $f$ in terms of the functions $e_k$ is not unique, the representation in \eqref{eq_frame_rec} is the most \emph{economical}, in the sense the following
\begin{proposition}\label{prop_frame_min}
	If $f = \sum\limits_{k \in K} a_k e_k$ for some sequence $\{a_k\}_{k\in K} \in \lt(K)$ and if $a_k \neq \spr{f,\et_k}$ for some $k \in K$, then
		\begin{equation*}
			\sum\limits_{k \in K} \abs{a_k}^2 > \sum\limits_{k \in K} \abs{\spr{f,\et_k}}^2 \,.
		\end{equation*}
\end{proposition}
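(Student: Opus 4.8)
The plan is to recognize the canonical coefficient sequence $c := \Kl{\spr{f,\et_k}}_{k \in K}$ as the orthogonal projection in $\lt(K)$ of \emph{any} admissible coefficient sequence onto $\rge{F}$, so that a single Pythagorean identity delivers the strict inequality. Throughout I work with the analysis operator $F$, its adjoint $F^*$, and $S = F^*F$ from \eqref{def_F_Fad}.

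First I would translate both representations of $f$ into statements about $F^*$. By the reconstruction formula \eqref{eq_frame_rec}, the canonical coefficients satisfy $F^* c = \sum_{k} \spr{f,\et_k} e_k = f$, while the hypothesis $f = \sum_k a_k e_k$ reads $F^* a = f$. Subtracting, $F^*(a-c) = 0$, that is, $a - c \in \nsp{F^*}$.

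The geometric core of the argument is to show that $c \in \rge{F}$. Using $\et_k = S^{-1}e_k$ together with the self-adjointness of $S^{-1}$ (which is a positive operator by $B_2^{-1} I \le S^{-1} \le B_1^{-1} I$), each canonical coefficient rewrites as $\spr{f,\et_k} = \spr{f,S^{-1}e_k} = \spr{S^{-1}f, e_k}$. Hence $c = F(S^{-1}f) \in \rge{F}$. Finally I invoke the standard orthogonality relation $\nsp{F^*} = \rge{F}^{\perp}$, valid for any bounded operator between Hilbert spaces. Since $a - c \in \nsp{F^*}$ and $c \in \rge{F}$, it follows that $\spr{a-c,c}_{\lt(K)} = 0$, and therefore
\[
	\norm{a}_{\lt(K)}^2 = \norm{c}_{\lt(K)}^2 + \norm{a-c}_{\lt(K)}^2 .
\]
If $a_k \neq \spr{f,\et_k}$ for at least one $k$, then $a \neq c$, so $\norm{a-c}_{\lt(K)}^2 > 0$ and the claimed strict inequality $\sum_k \abs{a_k}^2 > \sum_k \abs{\spr{f,\et_k}}^2$ follows.

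I do not anticipate a serious obstacle here; the one point that genuinely needs care is the range membership $c \in \rge{F}$ (as opposed to merely $c \in \overline{\rge{F}}$), since the orthogonal splitting must be exact. The explicit identity $c = F(S^{-1}f)$ settles this without any closed-range assumption on $F$, and the remainder is routine bookkeeping with the properties of $F$, $F^*$, $S$, and the dual frame already recorded above.
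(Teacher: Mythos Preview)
Your argument is correct and is precisely the standard proof of this fact: identify the canonical coefficient sequence as $c = F(S^{-1}f) \in \rge{F}$, observe that any other admissible sequence differs from it by an element of $\nsp{F^*} = \rge{F}^\perp$, and apply Pythagoras. The paper itself does not supply a proof but simply refers the reader to Daubechies' monograph; your write-up is exactly the argument one finds there, so there is nothing to compare beyond noting that you have filled in what the paper leaves as a citation.
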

\begin{proof}
See for example \cite{Daubechies_1992}.
\end{proof}
Note that if $\{e_k\}_{k \in K}$ is a tight frame with bounds $B_1 = B_2 = B$, then we have that $S^{-1} = B^{-1} I$, $\et_k = e_k/B$ and therefore
	\begin{equation}\label{eq_frame_rec_tight}
		f = \frac{1}{B} \sum\limits_{k \in K} \spr{f, e_k} e_k\,,
		\qquad \text{and} \qquad
		\norm{f}^2 = \frac{1}{B} \sum\limits_{k\in K} \abs{\spr{f,e_k}}^2 \,.
	\end{equation}

Using these results, we now proceed to derive a frame decomposition of the atmospheric tomography operator $A$ below.

\section{Frame Decomposition}\label{sect_SVD}

In this section, we first derive a singular-value-type decomposition of the operator $\At$ in the case of only LGSs,  following the ideas of \citeNR. Afterwards, we derive a frame decomposition of the operator $A$ as defined in \eqref{def_A} where, in contrast to \citeNR, we do not restrict ourselves to only NGSs, but allow a mixture of both NGSs and LGSs, as well as (almost) arbitrary aperture shapes $\OA$.

\subsection{The Pure LGS Case}

In this section, we consider the periodic tomography operator \eqref{def_At} from \citeNR, but now for the case that instead of only NGSs, we consider a setting using only LGSs. Since in this case $\clg = \cl$ independent of the guide star direction $g$, the operator $\At$ can now be written in the form
	\begin{equation}\label{def_At_LGS}
	\begin{split}
		\At \, : \, \prod\limits_{l=1}^L \LtOTcl &\to \LtOT^G \,,
		\\
		\phi &\mapsto \vphi_g = (\At\phi)_g(r)
		:=
		\sum\limits_{l=1}^{L}\phi_l(\cl r+\aghl)  \,,
		\qquad g = 1,\dots,G \,,
		\end{split}
	\end{equation}
where, as before, $\OT = [-T,T]$, however now with $T$ sufficiently large, such that
	\begin{equation*}
		\OA + \aghl \subset \cl \, \OT \,,
		\qquad
		g=1,\dots,G\,, \quad l=1,\dots,L \,.
	\end{equation*}

We now derive a singular-value-type decomposition of the adapted operator $\At$ using the ideas from \citeNR. First, due to the presence of the constants $\cl$, it makes sense to use, for ever layer $l$, a different orthonormal basis of $\LtOTcl$, namely the functions
	\begin{equation}\label{def_wjkl}
		\wjkl(x,y) :=  \clg^{-1} \, \wjk\kl{(x,y)/\clg}
		\overset{\text{for only LGS}}{=}
		\cl^{-1} \, \wjk\kl{(x,y)/\cl}  	\,.
	\end{equation}
Any function $\pl \in \LtOTcl$ can be written in the form
	\begin{equation}\label{eq_exp_pl}
		\pl(x,y) = \sum\limits_{\jk \in \Z} \pjkl \, \wjkl(x,y) \,,
		\qquad
		\pjkl := \spr{\pl,\wjkl}_{\LtOl} \,,
	\end{equation}
and for $\phi = (\phi_l)_{l=1}^L \in \prod_{l=1}^L \LtOTcl $ we collect the expansion coefficients $\pjkl$ in the vectors $\pjk := (\pjkl)_{l=1}^L \in \C^L$. With this, we now get

\begin{proposition}
Let $\At$ be defined as in \eqref{def_At_LGS} and let the $G \times L$ matrices $\Atjk$ be defined by
	\begin{equation*}
		(\Atjk)_{gl} := (2T) \, \wjkl(\agx h_l, \agy h_l) \,.
	\end{equation*}
Then there holds
	\begin{equation}\label{eq_exp_A_LGS}
		(\At \phi)(x,y) = \sum\limits_{\jk \in \Z} \kl{\Atjk \pjk} \wjk(x,y) \,.
	\end{equation}
\end{proposition}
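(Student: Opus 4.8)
The plan is to verify \eqref{eq_exp_A_LGS} by applying $\At$ directly to the basis expansion of $\phi$ and tracking the action of the operator on each individual basis element. First I would insert the expansion \eqref{eq_exp_pl} of each component $\pl$ and use that $\At$ is linear and bounded, so that it may be applied term by term; it then suffices to understand $\At$ acting on a single scaled basis function $\wjkl$. Since $\At$ merely evaluates $\pl$ at the scaled and shifted argument $\cl r + \aghl$, the entire computation reduces to evaluating $\wjkl$ at $\cl r + \aghl$ for each layer $l$ and each guide-star direction $g$, and it is enough to prove the equality of the two sides of \eqref{eq_exp_A_LGS} componentwise, i.e. for each fixed $g = 1,\dots,G$.

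The heart of the argument is the single algebraic identity
\begin{equation*}
    \wjkl(\cl r + \aghl) = (\Atjk)_{gl}\, \wjk(r) \,,
\end{equation*}
which I would establish as follows. Inserting the definition \eqref{def_wjkl} of $\wjkl$ as a dilate of $\wjk$ gives $\wjkl(\cl r + \aghl) = \cl^{-1}\,\wjk\kl{r + \aghl/\cl}$; here the dilation built into $\wjkl$ cancels the scaling $\cl r$ exactly, leaving a pure shift of the argument by $\aghl/\cl$. This cancellation is precisely the reason for working with the layer-dependent basis $\wjkl$ in place of the unscaled $\wjk$. Because $\wjk$ is, up to the normalisation $1/(2T)$, a product of the complex exponentials $e^{ij\om x}$ and $e^{ik\om y}$, and because functions in $\LtOT$ are assumed periodic, shifting the argument is converted into multiplication by a modulation factor, namely $\wjk\kl{r + \aghl/\cl} = e^{ij\om \agx h_l/\cl}\,e^{ik\om \agy h_l/\cl}\,\wjk(r)$. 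Comparing the resulting prefactor $\cl^{-1}e^{ij\om \agx h_l/\cl}e^{ik\om \agy h_l/\cl}$ with $(\Atjk)_{gl} = (2T)\,\wjkl(\agx h_l, \agy h_l)$ --- which, by the same definition of $\wjkl$, evaluates to exactly this expression --- then yields the claimed identity.

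Finally, I would assemble the pieces. Substituting the identity into the term-by-term expansion gives
\begin{equation*}
\begin{split}
    (\At\phi)_g(r)
    &= \sum\limits_{l=1}^L \sum\limits_{\jk \in \Z} \pjkl\, (\Atjk)_{gl}\, \wjk(r)
    = \sum\limits_{\jk \in \Z} \kl{\sum\limits_{l=1}^L (\Atjk)_{gl}\,\pjkl}\, \wjk(r)
    \\
    &= \sum\limits_{\jk \in \Z} \kl{\Atjk\pjk}_g\, \wjk(r) \,,
\end{split}
\end{equation*}
where in the last step the inner sum is recognised as the $g$-th component of the matrix-vector product $\Atjk\pjk$ with $\pjk = (\pjkl)_{l=1}^L \in \C^L$. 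As this holds for every $g = 1,\dots,G$, it is precisely \eqref{eq_exp_A_LGS}. The one point demanding care, which I expect to be the only real obstacle, is the interchange of the finite sum over $l$ with the series over $\jk \in \Z$: the expansion of each $\pl$ converges in $\LtOTcl$ and $\At$ is bounded, so the operator may be moved inside the series and the resulting series converges in $\LtOT$; the central algebraic identity itself is elementary once the dilation-shift cancellation is observed.
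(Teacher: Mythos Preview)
Your proposal is correct and follows essentially the same route as the paper: both expand each $\pl$ in the basis $\wjkl$, exploit the identity $\wjkl(\cl r+\aghl)=(2T)\,\wjkl(\aghl)\,\wjk(r)$ coming from the exponential structure of $\wjk$, and then rearrange into the matrix--vector form $\Atjk\pjk$. The only cosmetic difference is in the justification of the interchange of sums---you invoke boundedness of $\At$ while the paper cites the uniform bound on $\norm{\Atjk}$---but these amount to the same thing.
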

\begin{proof}
Using the definition \eqref{def_At_LGS} of $\At$ and the expansion \eqref{eq_exp_pl}, we get
	\begin{equation*}
	\begin{split}
		(\At \phi)_g(x,y) &= \sum\limits_{l=1}^L \phi_l(\cl x + \agx h_l, \cl y + \agy h_l)
		\\
		&=
		\sum\limits_{l=1}^L 		\sum\limits_{\jk \in \Z} \pjkl \, \wjkl(\cl x + \agx h_l, \cl y + \agy h_l)
		\\
		&=
		\sum\limits_{l=1}^L \sum\limits_{\jk \in \Z}
		\pjkl (2T) \wjk(x,y) \wjkl(\agx h_l, \agy h_l) \,,
	\end{split}
	\end{equation*}
from which the assertion immediately follows after interchanging the series, which is allowed since the norm of $\Atjk$ is bounded independently of $j,k$.
\end{proof}

As in \citeNR we consider the singular system for each of the matrices $\Atjk$, i.e., the vectors $\vjkn \in \C^L$, $\ujkn \in \C^G$ and numbers $\sjkn$, $n=1,\dots,\rjk \leq \min\{G,L\}$ such that
	\begin{equation}
	\begin{split}
		\Atjk \pjk = &\sum\limits_{n=1}^{\rjk} \sjkn \kl{\vjkn^H \, \pjk} \ujkn \,,
		\\
		\vjkm^H \vjkn =& \,\, \delta_{mn} \,,
		\quad
		\ujkm^H \ujkn = \delta_{mn} \,,
		\\
		\sigma_{jk,1} &\geq \dots \geq \sigma_{jk,\rjk} > 0 \,,
	\end{split}
	\end{equation}
where the superscript $H$ denotes the Hermitian adjoint (i.e. the complex transpose), $\rjk$ is the rank of the matrix $\Atjk$, and the $\sjkn^2$ are the positive eigenvalues of the matrices $\Atjk^H \Atjk$ and $\Atjk \Atjk^H$, respectively.

Hence, the decomposition of $\At$ in terms of the functions $\wjk$ is given by
	\begin{equation}\label{eq_dec_A_svd_LGS}
		(\At \phi)(x,y) = \sum\limits_{\jk \in \Z} \kl{\sum\limits_{n=1}^{\rjk} \sjkn \kl{\vjkn^H \, \pjk} \ujkn} \wjk(x,y) \,,
	\end{equation}
which is completely analogous to \cite[Equation~(10)]{Neubauer_Ramlau_2017}, however with a different singular systems $(\sjkn,\ujkn,\vjkn)$. Thus, for incoming wavefronts
	\begin{equation*}
	 \vphi = \sum\limits_{\jk \in \Z} \vpjk \wjk  \in \LtOT^G \,,
	 \qquad
	 \vpjk \in \C^G \,,
	 \quad
	\end{equation*}
the best-approximate solution of the equation
	\begin{equation*}
		\At \phi = \vphi
	\end{equation*}
is given by
	\begin{equation*}
		(\AtD \vphi)(x,y) :=  \sum\limits_{\jk \in \Z}  \kl{\sum\limits_{n=1}^{\rjk}  \frac{\kl{\ujkn^H \,\vpjk}}{\sjkn} \vjkn  } \wjkl(x,y) \,,
	\end{equation*}
where $\AtD$ denotes the Moore-Penrose generalized inverse of $\At$ (see for example \cite{Engl_Hanke_Neubauer_1996,Neubauer_Ramlau_2017}), which is well-defined if and only if the following \emph{Picard-condition} holds
	\begin{equation*}
		\sum\limits_{\jk \in \Z}  \sum\limits_{n=1}^{\rjk}  \frac{\abs{\ujkn^H \, \vpjk}^2}{\sjkn^2} < \infty \,.
	\end{equation*}

\subsection{The General Case}

In this section, we derive a frame decomposition for the general atmospheric tomography operator $A$ as defined in \eqref{def_A}, i.e., for the case of arbitrary aperture domains $\OA$ and both NGSs and LGSs.

The main idea of \citeNR, where some of the ideas of the upcoming analysis are taken from, was to use the special properties of the functions $\wjk$ \eqref{def_wjk}, in particular that they form an orthogonal basis of $\LtOA$. Unfortunately, for general domains $\OA$, the functions $\wjk$ do not necessarily  form a basis of $\LtOA$. However, they do form a (tight) frame in the sense of Definition~\ref{def_frame}, as we see in the following

\begin{lemma}\label{lem_frame_wjk}
Let $\wjk$ be defined as in \eqref{def_wjk}. If $T$ is large enough such that $\OA \subset \OT$, then the system $\{\wjk\}_{j,k\in\Z}$ forms a tight frame over $\LtOA$ with frame bound $1$, i.e.,
	\begin{equation*}
		\norm{\psi}_\LtOA^2 = \sum\limits_{j,k \in \Z}  \abs{\spr{\psi,\wjk}_\LtOA}^2 \,,
		\qquad
		\forall \, \psi \in \LtOA \,.
	\end{equation*}
\end{lemma}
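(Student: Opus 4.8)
The plan is to reduce the claim to Parseval's identity for the orthonormal basis $\{\wjk\}_{j,k\in\Z}$ of $\LtOT$, a fact already recorded in the excerpt immediately after \eqref{def_wjk}. The single device that makes everything work is zero-extension: since $\OA \subset \OT$ by hypothesis, any $\psi \in \LtOA$ extends to a function $\psit \in \LtOT$ by setting $\psit := \psi$ on $\OA$ and $\psit := 0$ on $\OT \setminus \OA$. This extension is well-defined precisely because the assumption on $T$ guarantees that the support of $\psit$ stays inside the cube on which the $\wjk$ are orthonormal.

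First I would observe that zero-extension preserves the $L_2$-norm, i.e. $\norm{\psit}_\LtOT = \norm{\psi}_\LtOA$, since the integrand defining the $\LtOT$-norm vanishes on $\OT \setminus \OA$. For the same reason, each Fourier coefficient is left unchanged: because $\psit$ is supported in $\OA$, the integral defining $\spr{\psit,\wjk}_\LtOT$ collapses to the integral over $\OA$, so that $\spr{\psit,\wjk}_\LtOT = \spr{\psi,\wjk}_\LtOA$ for every $j,k \in \Z$. Both identities are immediate at the level of $L_2$ functions once one notes $\psit \in \LtOT$.

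Then I would simply invoke Parseval's identity for the orthonormal basis $\{\wjk\}$ applied to $\psit$, which gives $\norm{\psit}_\LtOT^2 = \sum_{j,k\in\Z} \abs{\spr{\psit,\wjk}_\LtOT}^2$. Substituting the two identities from the previous step turns the left-hand side into $\norm{\psi}_\LtOA^2$ and each summand on the right into $\abs{\spr{\psi,\wjk}_\LtOA}^2$, which is exactly the asserted equality. Comparing this with Definition~\ref{def_frame}, the same equality furnishes both inequalities \eqref{eq_frame} with $B_1 = B_2 = 1$, so the system is a tight frame with bound $1$.

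I do not expect any genuine obstacle here: the argument is the standard observation that restricting an orthonormal basis to a measurable subset, via zero-extension of the functions on that subset, yields a Parseval (tight) frame. The only place the hypothesis $\OA \subset \OT$ is actually used is in guaranteeing that the zero-extension lands in $\LtOT$ with support contained in the orthonormality cube; everything else is a two-line bookkeeping of norm and inner-product preservation. If anything warrants a sentence of care, it is merely confirming these two reductions hold as $L_2$ identities rather than pointwise, which follows at once from $\psit \in \LtOT$.
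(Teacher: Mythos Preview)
Your proposal is correct and is essentially identical to the paper's proof: the paper likewise defines the zero-extension $\psit := \psi\, I_\OA$, uses that $\norm{\psi}_\LtOA = \norm{\psit}_\LtOT$ and $\spr{\psi,\wjk}_\LtOA = \spr{\psit,\wjk}_\LtOT$, and then invokes Parseval on the orthonormal basis $\{\wjk\}$ of $\LtOT$. The only cosmetic difference is that the paper phrases the middle step as the $\wjk$ being a tight frame with bound $1$ on $\LtOT$, while you call it Parseval's identity directly.
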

\begin{proof}
We start by defining $\psit := \psi I_\OA$ where $I_\OA$ denotes the indicator function of $\OA$. Now since $\psit \in \LtOT$ and $\psit = \psi$ on $\OA$ and $\psit = 0$ on $\OT\setminus \OA$, we get
	\begin{equation*}
		\norm{\psi}_\LtOA^2
		=
		\norm{\psit}_\LtOT^2
		=
		\sum\limits_{\jk\in\Z}  \abs{\spr{\psit,\wjk}_\LtOT}^2
		=
		\sum\limits_{\jk\in\Z}  \abs{\spr{\psi,\wjk}_\LtOA}^2 \,,
	\end{equation*}
where we have used that since the $\wjk$ form an orthonormal basis of $\LtOT$, they are also a tight frame with frame bounds $B_1 = B_2 = 1$. This proves the assertion.
\end{proof}

Now, since $\{\wjk\}_{\jk \in \Z}$ forms a tight frame with bound $1$, it is also its own dual frame. Hence, \eqref{eq_frame_rec} implies that any function $\psi \in \LtOA$ can be written in the form
	\begin{equation}\label{eq_frame_rec_LtOA}
		\psi(x,y) = \sum\limits_{j,k \in \Z} \spr{\psi,\wjk}_\LtOA \wjk(x,y) \,.
	\end{equation}
In particular, we have that the incoming wavefronts $\vphi$ can be written in the form
	\begin{equation}\label{eq_vphi_dec}
		\vphi = \sum\limits_{\jk \in \Z} \vpjk \wjk \,,
		\qquad
		\vpjk := \kl{\vpjkg}_{g=1}^G :=  \kl{\spr{\vphi_g,\wjk}_\LtOA}_{g=1}^G \in \C^G \,.
	\end{equation}

We also want to expand functions on $\LtOl$ in terms of frames. It is not difficult to find frames for $\LtOl$;  for example, for large enough $T$, the sets $\Kl{\wjk}_{\jk\in\Z}$ or $\Kl{\wjkl}_{\jk\in\Z}$ already form frames over $\LtOl$. However, for the upcoming analysis, those frames do not satisfy the specific needs of the problem under consideration. Hence, we use a different, problem tailored frame, which we build from the functions
	\begin{equation}\label{def_frame_special}
	\begin{split}
		\wjklg(x,y) :=& \, \clg^{-1}\wjk((x,y)/\clg)  \, I_{\clg \OA + \aghl}(x,y)		 \,,
	\end{split}
	\end{equation}
with $\wjk$ defined as in \eqref{def_wjk}, where we choose $T$ large enough such that 
	\begin{equation}\label{cond_T}
		\OA \subset \OT
		\qquad
		\text{and}
		\qquad
		\Ol \subset \clg \OT
		\quad \forall \,  l \in \{1,\dots,L\}\,, \, g \in \{1,\dots,G \} \,,
	\end{equation}
which we assume to hold from now on. That these functions can indeed form frames can be seen from the following

\begin{proposition}\label{prop_frame_wjklg}
Let $\wjklg$ be defined as in \eqref{def_frame_special} and let $T$ be large enough such that \eqref{cond_T} holds. Then, for fixed $l$, the set $\Kl{\wjklg}_{\jk \in \Z, \, g=1,\dots,G}$ forms a frame over $\LtOl$ with frame bounds $1 \leq B_1 \leq B_2 \leq G$.
\end{proposition}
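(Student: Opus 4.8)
The plan is to first establish the frame property separately for each guide star direction $g$ and then to assemble these into a frame over all of $\LtOl$ by bookkeeping how often the individual subdomains overlap. Recall that $\Ol = \bigcup_{g=1}^G \OAaghl$ with $\OAaghl = \clg\OA + \aghl$, and that by construction each $\wjklg$ is supported on $\OAaghl$; the whole argument is carried out for fixed $l$.

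First I would show that, for each \emph{fixed} $g$, the system $\Kl{\wjklg}_{\jk\in\Z}$ is a tight frame with bound $1$ over $L_2(\OAaghl)$. This is the exact analogue of Lemma~\ref{lem_frame_wjk}, obtained via dilation: the functions $\eta_{jk}(x,y) := \clg^{-1}\wjk((x,y)/\clg)$ form an orthonormal basis of $L_2(\clg\OT)$, since the scaling by $\clg$ together with the normalization $\clg^{-1}$ is a unitary map from $\LtOT$ onto $L_2(\clg\OT)$. By \eqref{cond_T} we have $\OAaghl \subset \Ol \subset \clg\OT$, so $\OAaghl$ is a subdomain of $\clg\OT$ on which the indicator in \eqref{def_frame_special} equals one, i.e.\ $\wjklg = \eta_{jk}$ there. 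Mimicking the proof of Lemma~\ref{lem_frame_wjk}, for $\psi\in L_2(\OAaghl)$ I extend $\psi$ by zero to $\psit\in L_2(\clg\OT)$ and invoke the orthonormal basis property of $\Kl{\eta_{jk}}$ to obtain
	\begin{equation*}
		\norm{\psi}_{L_2(\OAaghl)}^2
		=
		\norm{\psit}_{L_2(\clg\OT)}^2
		=
		\sum_{\jk\in\Z}\abs{\spr{\psit,\eta_{jk}}_{L_2(\clg\OT)}}^2
		=
		\sum_{\jk\in\Z}\abs{\spr{\psi,\wjklg}_{L_2(\OAaghl)}}^2 \,.
	\end{equation*}
The shift by $\aghl$ plays no role here beyond fixing which subdomain of $\clg\OT$ we restrict to.

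Next I would sum over $g$. For $\psi\in\LtOl$, since $\wjklg$ is supported on $\OAaghl$, the inner product $\spr{\psi,\wjklg}_\LtOl$ only sees the restriction $\psi|_{\OAaghl}$, so the single-$g$ identity above yields $\sum_{\jk}\abs{\spr{\psi,\wjklg}_\LtOl}^2 = \int_{\OAaghl}\abs{\psi}^2$. Summing over $g=1,\dots,G$ and introducing the covering-multiplicity function $N(r):=\#\Kl{g : r\in\OAaghl}$ gives
	\begin{equation*}
		\sum_{g=1}^G\sum_{\jk\in\Z}\abs{\spr{\psi,\wjklg}_\LtOl}^2
		=
		\sum_{g=1}^G\int_{\OAaghl}\abs{\psi}^2
		=
		\int_{\Ol} N(r)\,\abs{\psi(r)}^2\,dr \,.
	\end{equation*}
Since $\Ol$ is precisely the union of the $\OAaghl$, every $r\in\Ol$ lies in at least one and at most all $G$ of the subdomains, so $1\le N(r)\le G$ on $\Ol$. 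Sandwiching the middle integral between $\int_{\Ol}\abs{\psi}^2$ and $G\int_{\Ol}\abs{\psi}^2$ then produces \eqref{eq_frame} with admissible constants $1$ and $G$; hence the optimal frame bounds satisfy $1\le B_1\le B_2\le G$, as claimed.

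I expect the only genuine subtlety to be the bookkeeping in the summation step: one must count the overlaps of the subdomains $\OAaghl$ correctly (which is exactly what $N(r)$ encodes), and the lower bound $N(r)\ge1$ must rely on nothing more than $\Ol=\bigcup_g\OAaghl$. The dilation argument in the single-$g$ step is routine once one checks that $\Kl{\eta_{jk}}$ is an orthonormal basis of $L_2(\clg\OT)$ and that \eqref{cond_T} guarantees $\OAaghl\subset\clg\OT$; boundary overlaps between the subdomains are $L_2$-negligible and affect none of the norms.
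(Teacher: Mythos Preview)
Your proof is correct and follows essentially the same approach as the paper: both arguments reduce $\sum_{g,\jk}\abs{\spr{\psi,\wjklg}_\LtOl}^2$ to $\sum_g\int_{\clg\OA+\aghl}\abs{\psi}^2$ by using that the dilated functions $\clg^{-1}\wjk((\cdot,\cdot)/\clg)$ form an orthonormal basis of $L_2(\clg\OT)$, and then bound this sum via the covering $\Ol=\bigcup_g(\clg\OA+\aghl)$. Your introduction of the multiplicity function $N(r)$ is a slightly more explicit way of phrasing the overlap bookkeeping, but the argument is the same.
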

\begin{proof}
Let $\psi \in \LtOl$ be arbitrary but fixed and start by looking at
	\begin{equation}\label{eq_helper_1}
	\begin{split}
		&\sum\limits_{g=1}^{G}
		\sum\limits_{\jk \in \Z} \abs{\spr{\psi,\wjklg}_\LtOl}^2
		=
		\sum\limits_{g=1}^{G}
		\sum\limits_{\jk \in \Z}  \abs{\spr{\psi,\clg^{-1}\wjk((\cdot,\cdot)/\clg) \, I_{\clg \OA + \aghl}}_\LtOl}^2
		\\
		& \qquad \qquad =
		\sum\limits_{g=1}^{G}
		\sum\limits_{\jk \in \Z}  \abs{\spr{\psi \,  I_{\clg \OA + \aghl},\clg^{-1}\wjk((\cdot,\cdot)/\clg) }_\LtOTclg}^2 \,,
	\end{split}
	\end{equation}
where we have used that $\clg \OA + \aghl \subset \Ol \subset \clg\, \OT$. Now since for each (fixed) $g$ the functions $\clg^{-1}\wjk((x,y)/\clg)$ form an orthonormal basis of $\LtOTclg$ (and thus a tight frame with bound $1$), we get that
	\begin{equation*}
	\begin{split}
		&\sum\limits_{g=1}^{G}
		\sum\limits_{\jk \in \Z}  \abs{\spr{\psi \,  I_{\clg \OA + \aghl},\clg^{-1}\wjk((\cdot,\cdot)/\clg) }_\LtOTclg}^2
		=
		\sum\limits_{g=1}^{G} \norm{\psi \,  I_{\clg \OA + \aghl}}_\LtOTclg^2
		\\
		& \qquad =
		\sum\limits_{g=1}^{G} \int_{\clg\OT} \abs{\psi \,  I_{\clg \OA + \aghl}}^2
		= \sum\limits_{g=1}^{G} \int_{\clg \OA + \aghl} \abs{\psi }^2 \,,
	\end{split}
	\end{equation*}
which, together with \eqref{eq_helper_1}, yields
	\begin{equation}\label{eq_helper_2}
	\begin{split}
		\sum\limits_{g=1}^{G}
		\sum\limits_{\jk \in \Z} \abs{\spr{\psi,\wjklg}_\LtOl}^2
		= \sum\limits_{g=1}^{G} \int_{\clg \OA + \aghl} \abs{\psi }^2 \,.
	\end{split}
	\end{equation}
Now since $\Ol = \bigcup\limits_{g=1}^G \kl{\clg \OA + \aghl}$, we get that
	\begin{equation*}
	\begin{split}
		\norm{\psi}_\LtOl^2 =
		\int_{\Ol}  \abs{\psi }^2 \, \,
		\leq
		\sum\limits_{g=1}^{G} \int_{\clg \OA + \aghl} \abs{\psi }^2 \, \,
		\leq
		\sum\limits_{g=1}^{G} \int\limits_{\Ol}  \abs{\psi }^2 \, \,
		= G \norm{\psi}_\LtOl^2 \,.
	\end{split}
	\end{equation*}
Combing this together with \eqref{eq_helper_2}, we get that
	\begin{equation*}
	\begin{split}
		\norm{\psi}_\LtOl^2
		\leq
		\sum\limits_{g=1}^{G}
		\sum\limits_{\jk \in \Z} \abs{\spr{\psi,\wjklg}_\LtOl}^2
		\leq
		G \norm{\psi}_\LtOl^2 \,,
	\end{split}
	\end{equation*}
which proves the assertion.
\end{proof}

Now, denoting for fixed $l$ the dual frame of $\Kl{\wjklg}_{\jk \in \Z, \, g=1,\dots,G}$ by $\Kl{\wjklgt}_{\jk \in \Z, \, g=1,\dots,G}$, the above proposition together with \eqref{eq_frame_rec} implies that any function $\pl \in \LtOl$ can be written in the form
	\begin{equation}\label{eq_exp_pl_lg}
		\pl(x,y) =
		\sum\limits_{g=1}^{G}
		\sum\limits_{\jk \in \Z}
		\pjklg \wjklgt(x,y) \,,
		\qquad
		\pjklg = \spr{\pl,\wjklg}_\LtOl \,.
	\end{equation}

Next, we want to find an expansion of $A \phi$ in terms of the frame $\Kl{\wjk}_{\jk\in\Z}$. Due to \eqref{eq_frame_rec_LtOA} and since $A \phi \in \LtOA$, such an expansion is given by
	\begin{equation}\label{eq_exp_A}
		A \phi = \kl{(A \phi)_g}_{g=1}^G = \sum\limits_{\jk \in \Z} \kl{\spr{(A \phi)_g,\wjk}_\LtOA}_{g=1}^G \wjk \,,
	\end{equation}
As we already know, even though this might not be the only possible expansion, it is the most economical one in the sense of Proposition~\ref{prop_frame_min}. Furthermore, due to \eqref{eq_frame_rec_tight}, this expansion allows us to express $\norm{A \phi - \vphi}$ in terms of the expansion coefficients of $A \phi$ and $\vphi$, which is important for determining an (approximate) solution of $A\phi \,=\, \vphi$. We now derive an explicit expression for the coefficients $\spr{(A \phi)_g,\wjk}_\LtOA$ in terms of the coefficients $\pjklg$ in the following

\begin{proposition}\label{prop_Apg}
Let  $\phi = (\phi_l)_{l=1,\dots,L}$ with $\phi_l \in \LtOl$ for all $l=1,\dots,L$, and let  $A$, $\wjk$, and $\pjklg$ be defined as in \eqref{def_A}, \eqref{def_wjk}, and \eqref{eq_exp_pl_lg}, respectively. Then there holds
	\begin{equation}\label{eq_Aphi_sum}
		\spr{(A \phi)_g,\wjk}_\LtOA = (2T) \sum\limits_{l=1}^{L} \clg^{-1}\wjk(\aghl/ \clg)  \pjklg \,.
	\end{equation}
\end{proposition}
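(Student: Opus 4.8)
The plan is to fix a guide star index $g$ and reduce, by linearity of both the inner product and the finite sum defining $(A\phi)_g$, to evaluating the single-layer contribution $\int_\OA \phi_l(\clg r + \aghl)\,\overline{\wjk(r)}\,dr$ for each $l$. The natural first move is the substitution $s = \clg r + \aghl$; since the integral is over a subset of $\R^2$, this introduces a Jacobian factor $\clg^{-2}$ and maps $\OA$ onto $\clg\OA + \aghl$, which by the very definition of $\OAaghl$ and $\Ol$ satisfies $\clg\OA + \aghl \subset \Ol$. The contribution of layer $l$ thus becomes $\clg^{-2}\int_{\clg\OA + \aghl}\phi_l(s)\,\overline{\wjk((s-\aghl)/\clg)}\,ds$.

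The key observation is that, because $\wjk(x,y) = (2T)^{-1}e^{ij\om x}e^{ik\om y}$ is a product of exponentials, it is multiplicative up to the normalization constant: $\wjk(a+b) = (2T)\,\wjk(a)\,\wjk(b)$ for all $a,b \in \R^2$, and moreover $\wjk(-c) = \overline{\wjk(c)}$. Applying both identities with $a = s/\clg$ and $c = \aghl/\clg$ and conjugating gives $\overline{\wjk((s-\aghl)/\clg)} = (2T)\,\overline{\wjk(s/\clg)}\,\wjk(\aghl/\clg)$, so that the phase $\wjk(\aghl/\clg)$ pulls out of the integral as a constant. This is precisely the step that produces the $\wjk(\aghl/\clg)$ appearing in the claimed formula.

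What remains after pulling out the constant is $(2T)\,\wjk(\aghl/\clg)\,\clg^{-2}\int_{\clg\OA + \aghl}\phi_l(s)\,\overline{\wjk(s/\clg)}\,ds$, and I would finish by recognizing the surviving integral as a frame coefficient. Indeed, from the definition $\wjklg(s) = \clg^{-1}\wjk(s/\clg)\,I_{\clg\OA + \aghl}(s)$ together with $\clg\OA + \aghl \subset \Ol$, one has $\pjklg = \spr{\phi_l,\wjklg}_\LtOl = \clg^{-1}\int_{\clg\OA + \aghl}\phi_l(s)\,\overline{\wjk(s/\clg)}\,ds$, so the integral equals $\clg\,\pjklg$. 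Substituting and summing over $l$ then yields the asserted identity.

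I do not expect a genuine obstacle; the result is a direct computation, and the only real care needed is the bookkeeping of the multiplicative constants. In particular, one must balance the Jacobian factor $\clg^{-2}$ against the normalization $\clg^{-1}$ hidden in $\wjklg$, which re-enters as $\clg^{+1}$ when the integral is rewritten via $\pjklg$, so that exactly one factor $\clg^{-1}$ survives; and one must track the normalization $(2T)^{-1}$ of $\wjk$ against the $(2T)$ generated by the multiplicativity identity, so that a single $(2T)$ remains in the prefactor. Finally, the complex conjugations must be handled so that the surviving phase is $\wjk(\aghl/\clg)$ and not its conjugate.
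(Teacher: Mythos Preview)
Your proposal is correct and follows essentially the same route as the paper's proof: linearity reduces to a single-layer term, the substitution $s=\clg r+\aghl$ introduces the Jacobian $\clg^{-2}$ and the domain $\clg\OA+\aghl$, the exponential structure of $\wjk$ extracts the phase $(2T)\wjk(\aghl/\clg)$, and the remaining integral is identified with $\clg\,\pjklg$ via the definition of $\wjklg$. Your explicit bookkeeping of the constants ($\clg^{-2}\cdot\clg=\clg^{-1}$ and the single surviving $(2T)$) matches the paper's computation exactly.
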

\begin{proof}
Due to the definition of $A$, and setting $r = (x,y)$ we have
	\begin{equation*}
	\begin{split}
		&\spr{(A \phi)_g,\wjk}_\LtOA
		=
		\sum\limits_{l=1}^{L} \spr{\pl(\clg \cdot + \aghl),\wjk}_\LtOA
 		=
		\sum\limits_{l=1}^{L} \int_{\OA} \pl(\clg r + \aghl) \overline{\wjk(r)} \, dr\,.
	\end{split}
	\end{equation*}
Using substitution in the integral yields
	\begin{equation*}
	\begin{split}
		&
		\int_{\OA} \pl(\clg r + \aghl) \overline{\wjk(r)} \, dr
		=
		\clg^{-2} \int_{\clg \OA + \aghl} \pl(r)\overline{ \wjk((r - \aghl)/ \clg)} \, dr
		\\
		& \qquad =
		\clg^{-2} (2T) \wjk(\aghl/ \clg)  \int_{\clg \OA + \aghl} \pl(r ) \overline{\wjk(r / \clg)}\, dr  \,.
	\end{split}
	\end{equation*}
Since $\clg \OA + \aghl \subset \Ol$, due to \eqref{def_frame_special} and \eqref{eq_exp_pl_lg}, we get
	\begin{equation*}
	\begin{split}
		&
		\clg^{-1} \int_{\clg \OA + \aghl} \pl(r ) \overline{\wjk(r / \clg)} \, dr
		=
		\clg^{-1}  \int_{\Ol} \pl(r) \overline{\wjk(r / \clg)  I_{\clg \OA + \aghl}(r)} \, dr
		=\pjklg \,.
	\end{split}
	\end{equation*}
Combining the above results, we get
	\begin{equation*}
	\begin{split}
		\spr{(A \phi)_g,\wjk}_\LtOA
 		&=
		\sum\limits_{l=1}^{L} \int_{\OA} \pl(\clg r + \aghl) \overline{\wjk(r)} \, dr
		\\		
		&=
		(2T)\sum\limits_{l=1}^{L}\clg^{-1}
		\wjk(\aghl/ \clg) \pjklg  \,.
	\end{split}
	\end{equation*}
which directly yields the assertion.
\end{proof}

Hence, combining \eqref{eq_exp_A} and the above proposition, we get that
	\begin{equation}\label{eq_exp_A_svd}
		(A \phi)(x,y)
		=
		\sum\limits_{\jk \in \Z}
		\kl{(2T)
		\sum\limits_{l=1}^{L}
		\clg^{-1}\wjk(\aghl/ \clg)  \pjklg
		}_{g=1}^G \wjk(x,y) \,,
	\end{equation}
Thus, if we define (with a slight abuse of notation), the vectors
	\begin{equation}\label{eq_def_phi_jk}
		\pjk :=  \kl{ \kl{\phi_{jk,l1}}_{l=1}^L
		, \dots , \kl{\phi_{jk,lG}}_{l=1}^L }  \in \C^{L\,\cdot G} \,,
	\end{equation}
and the $\C^{G \times (L \cdot G)}$ matrices
	\begin{equation}\label{def_Ajk}
		\Ajk :=
		(2T) \begin{pmatrix}
		\kl{c_{l,1}^{-1}\wjk\kl{\frac{\alpha_1 h_l}{c_{l,1}}} }_{l=1}^L &  0 & \cdots &  0\\
		0 & \kl{c_{l,2}^{-1}\wjk\kl{\frac{\alpha_2 h_l}{c_{l,2}}}}_{l=1}^L & \ddots&  \vdots \\
		\vdots & \ddots & \ddots &  0 \\
		0 & \cdots & 0& \kl{c_{l,G}^{-1}\wjk\kl{\frac{\alpha_G h_l}{c_{l,G}}}}_{l=1}^L
		\end{pmatrix} \,,
	\end{equation}
then we get the following expansion of the tomography operator
	\begin{equation}\label{eq_exp_A_svd_mat}
		(A \phi)(x,y)
		=
		\sum\limits_{\jk \in \Z}
		\kl{\Ajk \pjk }\wjk(x,y) \,.
	\end{equation}

The above expansion is obviously similar to \eqref{eq_exp_A_LGS} for the pure LGS case, however now with different (and slightly larger) matrices $\Ajk$. Hence, we can again consider the singular value decomposition of each of the matrices $\Ajk$, i.e., with another small abuse of notation, the vectors $\vjkn \in \C^{L\cdot G}$, $\ujkn \in \C^G$ and numbers $\sjkn$, $n=1,\dots,\rjk \leq G$ such that
	\begin{equation}\label{eq_SVD_ajk}
	\begin{split}
		\Ajk \pjk = &\sum\limits_{n=1}^{\rjk} \sjkn \kl{\vjkn^H\, \pjk} \ujkn \,,
		\\
		\vjkm^H \vjkn =& \,\, \delta_{mn} \,,
		\quad
		\ujkm^H \ujkn = \delta_{mn} \,,
		\\
		\sigma_{jk,1} &\geq \dots \geq \sigma_{jk,\rjk} > 0 \,,
	\end{split}
	\end{equation}
to get, in complete analogy to \eqref{eq_dec_A_svd_LGS}, the following frame decomposition of the atmospheric tomography operator $A$ defined in \eqref{def_A}:
	\begin{equation}\label{eq_dec_A_svd}
		(A \phi)(x,y) = \sum\limits_{\jk \in \Z} \kl{\sum\limits_{n=1}^{\rjk} \sjkn \kl{\vjkn^H\, \pjk} \ujkn} \wjk(x,y) \,.
	\end{equation}
Using this decomposition, we are now able to define the operator
 	\begin{equation}\label{def_AD}
 	\begin{split}
		(\AD \vphi)(x,y) &:= \sum\limits_{g=1}^{G}\sum\limits_{\jk \in \Z} \kl{ \kl{\sum\limits_{n=1}^{\rjk}  \frac{\kl{\ujkn^H\,\vpjk}}{\sjkn} \kl{\vjkn  }_{l+(g-1)L} }\wjklgt(x,y)}_{l=1}^L \,,
	\end{split}	
	\end{equation}
which we use to find a solution to the equation $A \phi = \vphi$ in Theorem~\ref{thm_svd_G1} below. Before we proceed to that though, we observe that the structure of the matrices $\Ajk$ allows to compute their singular-value-decomposition explicitly, which leads to the following

\begin{proposition}\label{prop_svd_expl}
Let $(\sjkn,\ujkn,\vjkn)_{n=1}^{\rjk}$, for $j,k \in \Z$ be the singular systems of the matrices $\Ajk$ as defined in \eqref{eq_SVD_ajk}. Then there holds
	\begin{equation}\label{eq_svd_expl}
		\rjk = G \,,
		\qquad
		\sjkn = \sqrt{\sum\limits_{l=1}^{L} c_{l,n}^{-2}}\,,
		\qquad
		\ujkn = \en \,,
	\end{equation}
	\begin{equation*}
		(\vjkn)_{i} = (2T) \sjkn^{-1}
		\begin{cases}
			c_{l,n}^{-1} \wjk\kl{\frac{-\alpha_{n}h_l}{c_{l,n}}}   &    \exists \, 1 \leq l \leq L : i = l + (n-1)L \,,
		    \\ 
			0   &  \text{else} \,,
		\end{cases} 
	\end{equation*}	
where $\en$ denotes the $n$-th unit vector in $\C^G$. 
\end{proposition}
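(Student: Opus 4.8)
The plan is to exploit the block structure of the matrices $\Ajk$ defined in \eqref{def_Ajk}. The crucial observation is that the $g$-th row of $\Ajk$ has nonzero entries only in the columns with indices $l+(g-1)L$, $l=1,\dots,L$ --- that is, in the $g$-th block of $L$ consecutive columns --- and vanishes everywhere else. Since distinct rows are therefore supported on disjoint sets of columns, the $G$ rows of $\Ajk$ are mutually orthogonal. This immediately tells us that the small Gram matrix $\Ajk\Ajk^H \in \C^{G\times G}$ is diagonal, with $(\Ajk\Ajk^H)_{gg}$ equal to the squared Euclidean norm of the $g$-th row.

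I would then evaluate these diagonal entries. By the definition \eqref{def_wjk} we have $\abs{\wjk(x,y)} = 1/(2T)$ for every $(x,y)$, so each nonzero entry $(2T)\clg^{-1}\wjk(\aghl/\clg)$ of the $g$-th row has modulus $\clg^{-1}$. Summing the squared moduli over $l$ gives
\[
    \Ajk\Ajk^H = \mathrm{diag}\kl{\sum_{l=1}^{L}c_{l,1}^{-2}, \dots, \sum_{l=1}^{L}c_{l,G}^{-2}} \,.
\]
Because all $\clg$ are strictly positive (as noted after the definition of $\clg$), every diagonal entry is strictly positive, whence $\Ajk\Ajk^H$ has full rank $G$. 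This yields $\rjk = G$ at once, and reading off the eigenvalues gives $\sjkn^2 = \sum_{l=1}^L c_{l,n}^{-2}$, i.e.\ $\sjkn = \sqrt{\sum_{l=1}^L c_{l,n}^{-2}}$. As $\Ajk\Ajk^H$ is already diagonal, its orthonormal eigenvectors are the standard basis vectors, so the left singular vectors are $\ujkn = \en$.

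The remaining task is to recover the right singular vectors from the standard relation $\vjkn = \sjkn^{-1}\Ajk^H\ujkn = \sjkn^{-1}\Ajk^H\en$. Applying $\Ajk^H$ to $\en$ extracts the complex conjugate of the $n$-th row of $\Ajk$, which is supported exactly on the indices $i = l+(n-1)L$ and zero otherwise; this reproduces the case distinction in the claim. Invoking $\overline{\wjk(x,y)} = \wjk(-x,-y)$, the surviving entries become $(2T)\sjkn^{-1}c_{l,n}^{-1}\wjk(-\alpha_n h_l/c_{l,n})$, precisely the asserted formula. Orthonormality of the $\vjkn$ is then automatic: their supports are disjoint (so they are orthogonal) and the factor $\sjkn^{-1}$ normalizes each one, since $\norm{\Ajk^H\en}^2$ is exactly $\sjkn^2$.

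I do not anticipate a serious obstacle; the entire computation hinges on the single structural observation that the rows of $\Ajk$ have disjoint supports and are hence orthogonal, after which $\Ajk\Ajk^H$ is diagonal and everything can be read off. The only points requiring care are the bookkeeping of the block indices $i = l+(g-1)L$ together with the complex conjugation when passing from the abstract singular vectors to their explicit componentwise form, and the fact that the descending-ordering convention $\sigma_{jk,1}\geq\dots\geq\sigma_{jk,\rjk}$ in \eqref{eq_SVD_ajk} is only met after relabelling the guide-star index $n$ so that $\sum_{l=1}^L c_{l,n}^{-2}$ is nonincreasing.
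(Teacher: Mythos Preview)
Your proposal is correct and follows essentially the same route as the paper: compute $\Ajk\Ajk^H$, observe that the block structure of $\Ajk$ makes it diagonal with entries $\sum_{l=1}^L \clg^{-2}$, read off $\rjk$, $\sjkn$, and $\ujkn=\en$, and then recover $\vjkn$ via $\vjkn=\sjkn^{-1}\Ajk^H\ujkn$. Your write-up is in fact slightly more detailed than the paper's (you spell out the disjoint-support observation, the conjugation identity $\overline{\wjk(x,y)}=\wjk(-x,-y)$, and the ordering caveat), but the underlying argument is identical.
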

\begin{proof}
Due to the structure of $\Ajk$, we have that
	\begin{equation*}
	\begin{split}
		\Ajk \Ajk^H 
		= (2T)^2 \, \text{diag} \kl{ \norm{\kl{\clg^{-1}\wjk(\alpha_g h_l/ c_{l,g})}_{l=1}^L}_\lt^2  }_{g=1,\dots,G}
		=
		\text{diag} \kl{ \sum\limits_{l=1}^{L} \clg^{-2}  }_{g=1,\dots,G} \,,
	\end{split}
	\end{equation*}
from which the formula for $\sjkn$, $\ujkn = \en$ and $\rjk = G$ immediately follow. Furthermore, since there holds 
	\begin{equation*}
		\vjkn = (1/\sjkn) \,\Ajk^H  \,\ujkn \,,
	\end{equation*}
the expression for $(\vjkn)$ immediately follows, which concludes the proof.
\end{proof}

Using the explicit representation derived above, we immediately get the following
\begin{corrolarry}\label{corr_AD}
The operator $\AD$ defined in \eqref{def_AD} can be written in the form
 	\begin{equation}\label{eq_AD_expl}
 	\begin{split}
		(\AD \vphi)(x,y) &:= (2T)\sum\limits_{g=1}^{G}\sum\limits_{\jk \in \Z} \kl{ \frac{\clg^{-1} }{\sjkg^2}
		\wjk\kl{-\frac{\aghl}{\clg}}   \kl{\vpjk}_g\wjklgt(x,y)}_{l=1}^L \,,
	\end{split}	
	\end{equation}
with the explicit form of the singular-values $\sjkg$ as derived in Proposition~\ref{prop_svd_expl}.	
\end{corrolarry}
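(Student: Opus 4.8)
The plan is to obtain \eqref{eq_AD_expl} by inserting the explicitly computed singular system of Proposition~\ref{prop_svd_expl} into the defining formula \eqref{def_AD} and simplifying. Since Proposition~\ref{prop_svd_expl} supplies $\rjk = G$, $\ujkn = \en$, together with closed forms for $\sjkn$ and the entries of $\vjkn$, every quantity appearing in \eqref{def_AD} is now explicit, so the corollary should reduce to a bookkeeping computation with no new analytic input.

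First I would use $\ujkn = \en$ to rewrite the coefficient $\ujkn^H\,\vpjk$ as the single component $\kl{\vpjk}_n$. The decisive step is then to analyse the factor $\kl{\vjkn}_{l+(g-1)L}$ occurring in the $l$-th slot of the $g$-th block. By Proposition~\ref{prop_svd_expl} the entry $\kl{\vjkn}_i$ is nonzero only when $i = l' + (n-1)L$ for some $1 \leq l' \leq L$; matching this against the fixed index $i = l + (g-1)L$ with $1 \leq l \leq L$ yields $(g-1)L + l = (n-1)L + l'$, which, because $l,l' \in \{1,\dots,L\}$, holds if and only if $n = g$ and $l' = l$. Hence for each fixed pair $(g,l)$ only the term $n = g$ survives, and the inner sum over $n$ in \eqref{def_AD} collapses to a single summand.

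Substituting the explicit value $\kl{\vjkg}_{l+(g-1)L} = (2T)\,\sjkg^{-1}\clg^{-1}\wjk(-\aghl/\clg)$ and carrying along the remaining factor $1/\sjkg$ then turns the surviving term into $(2T)\,\clg^{-1}\sjkg^{-2}\wjk(-\aghl/\clg)\kl{\vpjk}_g$, which is exactly the coefficient of $\wjklgt$ in \eqref{eq_AD_expl} once one recalls $\aghl = \alpha_g h_l$. The only genuine obstacle is the index-matching argument of the second step: one must justify carefully that the block structure of the vectors $\vjkn \in \C^{L\cdot G}$ forces $n = g$, since a careless treatment could spuriously retain cross terms between distinct guide-star blocks. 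Everything after that is a direct substitution, so I expect the proof to be short.
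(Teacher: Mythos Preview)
Your proposal is correct and follows essentially the same approach as the paper: both arguments insert the explicit singular system from Proposition~\ref{prop_svd_expl} into \eqref{def_AD}, use the block structure of $\vjkn$ to see that only the term $n=g$ survives in the inner sum, and then substitute $\ujkn^H\,\vpjk = (\vpjk)_n$ together with the explicit entry of $\vjkn$. Your index-matching argument (forcing $n=g$ and $l'=l$ from $l+(g-1)L = l'+(n-1)L$) is in fact spelled out slightly more carefully than in the paper itself.
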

\begin{proof}
Note that by Proposition~\ref{prop_svd_expl}, it follows that
	\begin{equation*}
		(\vjkn)_{l+(g-1)L} = (2T) \sjkn^{-1}
		\begin{cases}
			c_{l,n}^{-1} \wjk\kl{\frac{-\alpha_{n}h_l}{c_{l,n}}}   &    g = n \,,
		    \\ 
			0   &  \text{else} \,,
		\end{cases} 
	\end{equation*}	
from which, together with the expression for $\sjkn$ from Proposition~\ref{prop_svd_expl}, there follows
	\begin{equation}\label{eq_helper_4}
 	\begin{split}
		\sum\limits_{n=1}^{\rjk}  \frac{\kl{\ujkn^H\,\vpjk}}{\sjkn} \kl{\vjkn  }_{l+(g-1)L} 
		= 		
		(2T)  \frac{\kl{u_{jk,g}^H\,\vpjk}}{\sjkg^2}c_{l,g}^{-1} \wjk\kl{\frac{-\alpha_{g}h_l}{c_{l,g}}} 
		\,.
	\end{split}	
	\end{equation}
Furthermore, again due to Proposition~\ref{prop_svd_expl}, we get that
	\begin{equation}\label{eq_helper_11}
		u_{jk,g}^H\,\vpjk = (\en)^H\,\vpjk = (\vpjk)_g \,,
	\end{equation}		
which, together with \eqref{eq_helper_4} and the definition of $\AD$ immediately yields the assertion.
\end{proof}

Concerning the well-definedness of $\AD$, we can now derive the following
\begin{lemma}\label{lem_AD_welldef}
Let $\vphi \in \LtOA^G$ and let $\AD$ be defined as in \eqref{def_AD}. Then $\AD  \vphi$ is well-defined and there holds
	\begin{equation}\label{eq_bound_AD}
		\norm{\AD} \leq 
		\kl{\min\limits_{g=1,\dots,G} \Kl{ \sum\limits_{l=1}^L \clg^{-2} }}^{-\frac{1}{2}} 
		\leq
		1/\sqrt{L}\,.
	\end{equation}
\end{lemma}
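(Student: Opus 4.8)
The plan is to estimate $\norm{\AD\vphi}^2 = \sum_{l=1}^L \norm{(\AD\vphi)_l}_\LtOl^2$ layer by layer, using the explicit representation of $\AD$ from Corollary~\ref{corr_AD} together with the frame bounds of Proposition~\ref{prop_frame_wjklg}, the explicit singular values of Proposition~\ref{prop_svd_expl}, and the tight-frame identity of Lemma~\ref{lem_frame_wjk}. The central mechanism is that, since by Proposition~\ref{prop_svd_expl} the singular value $\sjkg = (\sum_{l=1}^L \clg^{-2})^{1/2}$ does not depend on $j,k$, the factor $\sum_{l=1}^L \clg^{-2}$ produced by summing over the layers cancels exactly one of the two powers of $\sjkg^2$ appearing in the denominator of $\AD$.

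In detail, I would fix $l$ and read off from \eqref{eq_AD_expl} that the $l$-th component of $\AD\vphi$ is the dual-frame synthesis $(\AD\vphi)_l = \sum_{g=1}^G \sum_{\jk\in\Z} a^{(l)}_{\jk,g}\,\wjklgt$ with coefficients $a^{(l)}_{\jk,g} = (2T)\,\clg^{-1}\sjkg^{-2}\,\wjk(-\aghl/\clg)\,(\vpjk)_g$. Since $\Kl{\wjklg}$ has lower frame bound $B_1 \geq 1$ (Proposition~\ref{prop_frame_wjklg}), its dual frame $\Kl{\wjklgt}$ has upper frame bound $B_1^{-1}\leq 1$, so by \eqref{eq_bound_F_Fadj} its synthesis operator has norm at most $1$, whence $\norm{(\AD\vphi)_l}_\LtOl^2 \leq \sum_{g,\jk} \abs{a^{(l)}_{\jk,g}}^2$. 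Using $\abs{\wjk}^2 \equiv (2T)^{-2}$ collapses each coefficient to $\abs{a^{(l)}_{\jk,g}}^2 = \clg^{-2}\sjkg^{-4}\,\abs{(\vpjk)_g}^2$, and summing over $l$ while invoking $\sum_{l=1}^L \clg^{-2} = \sjkg^2$ gives $\norm{\AD\vphi}^2 \leq \sum_{g,\jk} \sjkg^{-2}\abs{(\vpjk)_g}^2$.

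At this point I would bound $\sjkg^{-2} \leq (\min_{g}\sum_{l=1}^L\clg^{-2})^{-1}$ uniformly, pull this constant out, and apply Lemma~\ref{lem_frame_wjk} in the form $\sum_{\jk}\abs{(\vpjk)_g}^2 = \sum_{\jk}\abs{\spr{\vphi_g,\wjk}_\LtOA}^2 = \norm{\vphi_g}_\LtOA^2$, so that $\sum_{g,\jk}\abs{(\vpjk)_g}^2 = \norm{\vphi}^2$; this yields the first inequality in \eqref{eq_bound_AD}. For the second, I would use $\clg \leq 1$ (noted right after the definition of $\clg$), hence $\clg^{-2}\geq 1$ and $\sum_{l=1}^L\clg^{-2}\geq L$ for every $g$, so that $(\min_g\sum_l\clg^{-2})^{-1/2}\leq L^{-1/2}$. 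Well-definedness comes for free: the same computation shows that for each fixed $l$ the coefficient family $(a^{(l)}_{\jk,g})$ lies in $\ell_2$ (its squared norm equals $\sum_g \clg^{-2}\sjkg^{-4}\norm{\vphi_g}_\LtOA^2 < \infty$), so the dual-frame synthesis converges in $\LtOl$ and $\AD\vphi$ is a well-defined element of $\prod_{l=1}^L\LtOl$.

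The only real care point is bookkeeping: one must use the frame bound of the \emph{dual} frame (upper bound $B_1^{-1}\leq 1$) rather than that of $\Kl{\wjklg}$ itself when estimating the synthesis, and one must exploit the $jk$-independence of $\sjkg$ to perform the cancellation $\sum_l\clg^{-2}=\sjkg^2$. Neither step is genuinely difficult, so I expect no substantial obstacle beyond keeping the composite index $l+(g-1)L$ and the per-layer frames straight.
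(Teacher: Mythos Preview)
Your proposal is correct and follows essentially the same route as the paper's proof: both write $(\AD\vphi)_l$ as a dual-frame synthesis with the explicit coefficients from Corollary~\ref{corr_AD}, use that the dual-frame synthesis operator has norm at most $1$ (from the lower frame bound $B_1\geq 1$ of Proposition~\ref{prop_frame_wjklg}), exploit $\abs{\wjk}^2=(2T)^{-2}$ and the cancellation $\sum_l \clg^{-2}=\sjkg^2$ after summing over $l$, and finish with Lemma~\ref{lem_frame_wjk} and $\clg\leq 1$. The paper packages the synthesis step as $(\AD\vphi)_l=\Ft_l^* a_l$ with $\norm{\Ft_l^*}\leq 1$, but this is exactly your dual-frame bound; there is no substantive difference.
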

\begin{proof}
Let $\Ft_{l}$ be the frame operator (compare with \eqref{def_F_Fad}) corresponding to the dual frame $\Kl{\wjklgt}_{\jk \in \Z\,, g=1,\dots,G}$ of $\Kl{\wjklg}_{\jk \in \Z\,, g=1,\dots,G}$  and let $\Ft^*_{l}$ be its adjoint. Since the dual frame is also a frame but with inverse frame bounds, it follows from Proposition~\ref{prop_frame_wjklg} together with \eqref{eq_bound_F_Fadj} that 
	\begin{equation*}
		1/\sqrt{G} \leq \norm{ \Ft_{l} } = \norm{\Ft^*_{l}} \leq  1 \,.
	\end{equation*}
Hence, for any sequence $a_l = \Kl{a_{\jk,lg}}_{\jk \in \Z \,, g=1,\dots,G} \in \lt$ there holds
	\begin{equation}\label{eq_dual_stability}
		\norm{\Ft^*_{l} a_l }_\LtOl^2 \leq \norm{ a_l}_\lt^2 \,.
	\end{equation}
which we now use for the choice
	\begin{equation}\label{def_al}
		a_l :=\Kl{ (2T)\frac{\clg^{-1} }{\sjkg^2}
		\wjk\kl{-\frac{\aghl}{\clg}}   \kl{\vpjk}_g}_{\jk \in \Z \,, g=1,\dots,G} \,,
	\end{equation}
where $\Kl{\vpjk}_{\jk\in\Z}$  are the expansion coefficients of $\vphi$ defined in \eqref{eq_vphi_dec}. Since due to \eqref{eq_AD_expl} and the choice of $a_l$ there holds 
	\begin{equation}\label{eq_AD_al}
		(\AD \vphi)_l = \Ft^*_{l} a_l \,,
	\end{equation}
we now obtain from \eqref{eq_dual_stability} that
	\begin{equation*}
	\begin{split}
		\norm{(\AD \vphi)_l}_\LtOl^2 
		&\leq 
		\sum\limits_{g=1}^G \sum\limits_{\jk \in \Z} \abs{(2T)\frac{\clg^{-1} }{\sjkg^2}
		\wjk\kl{-\frac{\aghl}{\clg}}   \kl{\vpjk}_g}^2 
		\\
		&=
		 (2T)^2 \sum\limits_{g=1}^G \sum\limits_{\jk \in \Z} \frac{\clg^{-2} }{\sjkg^4} \abs{\kl{\vpjk}_g}^2 		
		\,.
	\end{split}
	\end{equation*}
Now summing over $l$ we get that
	\begin{equation*}
	\begin{split}
		\sum\limits_{l=1}	^L		
		\norm{(\AD \vphi)_l}_\LtOl^2 
		\leq 
		(2T)^2 \sum\limits_{l=1}	^L \sum\limits_{g=1}^G \sum\limits_{\jk \in \Z} \frac{\clg^{-2} }{\sjkg^4} \abs{\kl{\vpjk}_g}^2 		
		\overset{\eqref{eq_svd_expl}}{=}		
		\sum\limits_{g=1}^G \sum\limits_{\jk \in \Z} \frac{\abs{\kl{\vpjk}_g}^2 	}{\sjkg^2} \,,
	\end{split}
	\end{equation*}
and therefore,
	\begin{equation*}
	\begin{split}
		\sum\limits_{l=1}	^L		
		\norm{(\AD \vphi)_l}_\LtOl^2 
		\leq 
		\max\limits_{\underset{g=1,\dots,G}{\jk \in \Z}}\Kl{\sjkg^{-2}} \sum\limits_{g=1}^G \sum\limits_{\jk \in \Z} \abs{\kl{\vpjk}_g}^2 	
		=
		\max\limits_{\underset{g=1,\dots,G}{\jk \in \Z}}\Kl{\sjkg^{-2}} \norm{\vphi}_{\LtOA^G}^2  \,,
	\end{split}
	\end{equation*}
where the last equality follows from the definition \eqref{eq_vphi_dec} of the coefficients $\vpjk$ together with Lemma~\ref{lem_frame_wjk}. Due to Proposition~\ref{prop_svd_expl} and since $0 < \clg \leq 1$ the singular values $\sjkg$ are independent of $j,k$ and bounded away from $0$. Hence, since $\vphi \in \LtOA^G$, it follows that $\AD \vphi$ is well-defined. Furthermore, the above estimate together with the explicit expression for $\sjkg$ from Proposition~\ref{prop_svd_expl} implies that
	\begin{equation*}
	\begin{split}
		\norm{\AD} \leq 
		\max\limits_{\underset{g=1,\dots,G}{\jk \in \Z}}\Kl{\sjkg^{-1}}
		=
		\kl{\min\limits_{g=1,\dots,G} \Kl{ \sjkg}	 }^{-1}
		=
		\kl{\min\limits_{g=1,\dots,G} \Kl{ \sqrt{\sum\limits_{l=1}^L \clg^{-2} }}}^{-1}\,,	
	\end{split}
	\end{equation*}
which, together with $0 < \clg \leq 1$, immediately yields the assertion.
\end{proof}

With this, we are now able to prove the main theorem of this section.
\begin{theorem}\label{thm_svd_G1}
Let $\vphi \in \LtOA^G$ be given and let $\Kl{\vpjk}_{\jk\in\Z}$  be its expansion coefficients defined as in \eqref{eq_vphi_dec}. Furthermore, let $(\sjkn,\ujkn,\vjkn)_{n=1}^{\rjk}$ for $j,k\in \Z$ be the singular systems of the matrices $\Ajk$ defined in \eqref{eq_SVD_ajk}. Moreover, for all $l \in \Kl{1,\dots,L}$ let $a_l$ be defined as in \eqref{def_al} and satisfy $a_l \in R(F_l)$, where $F_l$ denotes the frame operator corresponding to $\Kl{\wjklg}_{\jk \in \Z\,, g=1,\dots,G}$. Then the function $\AD \vphi$ is a solution of the equation $A \phi = \vphi$. Furthermore, among all other solutions $\psi \in \D(A)$ of $A\phi = \vphi$ there holds
	\begin{equation}\label{eq_phi_min}
		\sum\limits_{g,l=1}^{G,L} \sum\limits_{\jk \in \Z} \abs{\spr{(\AD \vphi)_l,\wjklg}_\LtOl}^2 \leq
		\sum\limits_{g,l=1}^{G,L} \sum\limits_{\jk \in \Z} \abs{\spr{\psi_l,\wjklg}_\LtOl}^2 \,,
	\end{equation}
Moreover, among all other possible expansions of $\AD \vphi$ in terms of the functions $\wjklgt$, \eqref{def_AD} is the most economical one in the sense of Proposition~\ref{prop_frame_min}.
\end{theorem}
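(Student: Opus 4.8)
The plan is to reduce all three claims to a single identity: that, under the hypothesis $a_l \in R(F_l)$, the frame coefficients of the $l$-th component of $\AD \vphi$ with respect to $\Kl{\wjklg}$ are exactly the coefficients $(a_l)_{\jk,g}$ appearing in \eqref{def_al}. First I would record the operator identity $\Ft^*_l F_l = I$, which is just the reconstruction formula \eqref{eq_frame_rec} written for the frame $\Kl{\wjklg}$ and its dual $\Kl{\wjklgt}$. From \eqref{eq_AD_al} we have $(\AD \vphi)_l = \Ft^*_l a_l$, so the sought coefficients are $\spr{(\AD\vphi)_l, \wjklg}_\LtOl = (F_l \Ft^*_l a_l)_{\jk,g}$. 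The operator $F_l\Ft^*_l$ is in general only the orthogonal projection onto $R(F_l)$, not the identity; but writing $a_l = F_l \psi$ for some $\psi$ (possible precisely because $a_l \in R(F_l)$) and using $\Ft^*_l F_l = I$ gives $F_l\Ft^*_l a_l = F_l \Ft^*_l F_l \psi = F_l \psi = a_l$. This is exactly where the range hypothesis is used, and I expect it to be the main obstacle: the whole theorem hinges on the coefficients of the synthesized function coinciding with the prescribed $a_l$, which fails without $a_l \in R(F_l)$.

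With this identity in hand, I would assemble, for each $\jk$, the coefficient vector $\pjk \in \C^{L\cdot G}$ of $\AD\vphi$ as in \eqref{eq_def_phi_jk}. Comparing $(a_l)_{\jk,g}$ with the explicit singular vectors of Proposition~\ref{prop_svd_expl} (as already done in the proof of Corollary~\ref{corr_AD}) shows that $\pjk = \sum_{n=1}^{\rjk} \sjkn^{-1}(\ujkn^H \vpjk)\vjkn = \Ajk^\dagger \vpjk$, the minimum-norm solution of $\Ajk \pjk = \vpjk$. For the solution claim I then apply $\Ajk$ and use $\Ajk\vjkn = \sjkn\ujkn$ together with $\rjk = G$ and $\ujkn = \en$ from Proposition~\ref{prop_svd_expl} to obtain $\Ajk\pjk = \sum_{n=1}^G (\en^H\vpjk)\en = \vpjk$. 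Feeding this into the expansion \eqref{eq_exp_A_svd_mat} and invoking the frame decomposition \eqref{eq_vphi_dec} of $\vphi$ yields $A(\AD\vphi) = \sum_{\jk}\vpjk\wjk = \vphi$, so $\AD\vphi$ solves $A\phi = \vphi$. Note that surjectivity of each $\Ajk$, i.e.\ $\rjk = G$, is precisely what makes $\AD$ a right inverse here.

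For the minimality claim \eqref{eq_phi_min}, I would first observe that for any $\psi \in \D(A)$ the quantity $\sum_{g,l}\sum_{\jk}\abs{\spr{\psi_l,\wjklg}_\LtOl}^2$ equals $\sum_{\jk}\norm{\pjk}_{\C^{L\cdot G}}^2$ for the corresponding coefficient vectors, and that $A\psi = \vphi$ forces, upon taking inner products with $\wjk$ and using Proposition~\ref{prop_Apg}, the constraint $\Ajk\pjk = \vpjk$ for every $\jk$. Thus the minimization decouples over $\jk$ into the finite-dimensional problems of minimizing $\norm{\pjk}^2$ subject to $\Ajk\pjk = \vpjk$, each solved by the minimum-norm solution $\Ajk^\dagger\vpjk$; since the coefficients of $\AD\vphi$ are exactly these minimum-norm solutions by the previous paragraph, summing over $\jk$ gives \eqref{eq_phi_min}. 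Finally, for the economy claim I would apply Proposition~\ref{prop_frame_min} with the roles $e_k = \wjklgt$ and dual $\tilde{e}_k = \wjklg$ (the dual of the dual frame being the original frame): by the key identity the coefficients $(a_l)_{\jk,g}$ in the expansion \eqref{def_AD} are precisely $\spr{(\AD\vphi)_l,\wjklg}_\LtOl$, i.e.\ the reconstruction coefficients associated with $\Kl{\wjklgt}$, so Proposition~\ref{prop_frame_min} identifies \eqref{def_AD} as the most economical expansion. The only genuine work is the key identity of the first paragraph; the remaining steps are bookkeeping against Propositions~\ref{prop_Apg} and~\ref{prop_svd_expl}.
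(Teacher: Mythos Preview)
Your proposal is correct and follows essentially the same route as the paper: both reduce everything to the key identity $F_l\Ft_l^* a_l = a_l$ under the hypothesis $a_l \in R(F_l)$, then identify the resulting frame coefficients of $\AD\vphi$ with the pseudoinverse $\Ajk^\dagger\vpjk$ via Proposition~\ref{prop_svd_expl} and Corollary~\ref{corr_AD}, and conclude by the decoupled minimum-norm property and Proposition~\ref{prop_frame_min}. The only cosmetic difference is that the paper obtains $F_l\Ft_l^* = P_l$ by citing the projector characterization from \cite{Daubechies_1992}, whereas you derive it by writing $a_l = F_l\psi$ and invoking $\Ft_l^* F_l = I$; these are equivalent.
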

\begin{proof}
Let $\vphi \in \LtOA^G$ and let $\vpjkg = \spr{\vphi_g,\wjk}_\LtOA$ be its canonical expansion coefficients, collected in the vectors $\vpjk$; compare with \eqref{eq_vphi_dec}. Furthermore, let $\phi \in \D(A)$ and let $\pjklg = \spr{\pl,\wjklg}_\LtOl$ be its canonical expansion coefficients, collected in the vectors $\pjk$; compare with \eqref{eq_exp_pl_lg} and \eqref{eq_def_phi_jk}. Due to Proposition~\ref{prop_Apg} there holds,
	\begin{equation}\label{eq_helper_3}
		\spr{(A \phi)_g,\wjk}_\LtOA \overset{\eqref{eq_Aphi_sum}}{=} (2T) \sum\limits_{l=1}^{L} \clg^{-1} \wjk(\aghl/ \clg)  \pjklg 
		\overset{\eqref{def_Ajk}}{=} (\Ajk\pjk)_g \,,
	\end{equation}
where the matrices $\Ajk$ are as in \eqref{def_Ajk}. Since due to Lemma~\ref{lem_frame_wjk} the set $\Kl{\wjk}_{\jk \in\Z}$ forms a tight frame over $\LtOA$ with frame bound $1$, it follows with \eqref{eq_frame_rec_tight} that
	\begin{equation*}
	\begin{split}
		&\norm{A \phi - \vphi}_\LtOA^2
		=
		\sum\limits_{g=1}^{G} \norm{(A \phi)_g - \vphi_g}_\LtOA^2
		\\
		& \qquad \overset{\eqref{eq_frame_rec_tight}}{=}
		\sum\limits_{g=1}^{G}
		\sum\limits_{\jk\in\Z}
		\abs{\spr{(A\phi)_g - \vphi_g,\wjk}_\LtOA}^2
		\overset{\eqref{eq_helper_3}}{=}
		\sum\limits_{g=1}^{G}
		\sum\limits_{\jk\in\Z}
		\abs{(\Ajk \pjk)_g - \vpjkg}^2 \,.
	\end{split}
	\end{equation*}
Hence, together with 
	\begin{equation*}
	\begin{split}
		\sum\limits_{g=1}^{G}
		\sum\limits_{\jk\in\Z}
		\abs{(\Ajk \pjk)_g - \vpjkg}^2
		=
		\sum\limits_{\jk\in\Z}
		\sum\limits_{g=1}^{G}
		\abs{(\Ajk \pjk)_g - \vpjkg}^2
		=
		\sum\limits_{\jk\in\Z}
		\norm{\Ajk \pjk - \vpjk}_{\lt}^2 \,,
	\end{split}
	\end{equation*}
we obtain
	\begin{equation}\label{eq_residual_equivalence}
		\norm{A \phi - \vphi}_\LtOA^2 =  
		\sum\limits_{\jk\in\Z}
		\norm{\Ajk \pjk - \vpjk}_{\lt}^2\,.
	\end{equation}
Thus, $\phi$ is a solution of $A \phi = \vphi$ if and only if its expansion coefficients $\pjk$ satisfy 
	\begin{equation}\label{eq_helper_5}
		\Ajk \pjk = \vpjk \,, \qquad \forall \, \jk \in \Z \,.
	\end{equation}
The solutions of these matrix-vector systems can be characterized via the singular systems $(\sjkn,\ujkn,\vjkn)_{n=1}^{\rjk}$ of the matrices $\Ajk$. Since in Proposition~\ref{prop_svd_expl} we showed that $\rjk = G$ and $\ujkn = \en$, it follows that at least one solution of \eqref{eq_helper_5} exists.  Hence, by the properties of the SVD it follows that the vectors
	\begin{equation}\label{eq_helper_7}
		\Ajk^\dagger \vpjk := \sum\limits_{n=1}^{\rjk}  \frac{\kl{\ujkn^H\,\vpjk}}{\sjkn} \vjkn \,,
	\end{equation}	
are the unique solution of \eqref{eq_helper_5} with minimal $\lt$ norm. Hence, if $\phi$ can be found such that $\pjk = \Ajk^\dagger \vpjk$, then for all other solutions $\psi$ of $A\phi = \vphi$ there holds
	\begin{equation}\label{eq_helper_6}
		\sum\limits_{g,l=1}^{G,L}  \abs{\spr{\phi_l,\wjklg}_\LtOl}^2 
		= 
		\norm{\phi_{jk}}_{\lt}
		\leq
		\norm{\psi_{jk}}_{\lt}
		=
		\sum\limits_{g,l=1}^{G,L} 
		 \abs{\spr{\psi_l,\wjklg}_\LtOl}^2 \,.
	\end{equation}
We now show that the choice $\phi := \AD \vphi$ is exactly such that
	\begin{equation}\label{eq_helper_8}
		\pjk = (\AD \vphi)_{\jk} = \Ajk^\dagger \vpjk 
		\,, \qquad \forall \, \jk \in \Z \,.
	\end{equation}
Since due to \eqref{eq_def_phi_jk} there holds $\pjklg = \kl{ \pjk }_{l+(g-1)L} $, this is equivalent to showing
	\begin{equation*}
		\kl{ \pjk }_{l+(g-1)L} = \kl{ \Ajk^\dagger \vpjk }_{l+(g-1)L}
		\,, \qquad \forall \, \jk \in \Z \,, \, 1\leq l\leq L \,, \, 1\leq g\leq G \,.
	\end{equation*}	
For this, note first that as in the proof of Corollary~\ref{corr_AD} we have that
	\begin{equation*}
	\begin{split}
		\kl{ \Ajk^\dagger \vpjk }_{l+(g-1)L}
		\overset{\eqref{eq_helper_7}}{=}
		\sum\limits_{n=1}^{\rjk}  \frac{\kl{\ujkn^H\,\vpjk}}{\sjkn} \kl{\vjkn  }_{l+(g-1)L} 
		\underset{\eqref{eq_helper_11}}{\overset{\eqref{eq_helper_4}}{=} }		
		(2T)  \frac{c_{l,g}^{-1}}{\sjkg^2}(\vpjk)_g \wjk\kl{\frac{-\alpha_{g}h_l}{c_{l,g}}} \,,
	\end{split}
	\end{equation*} 
and thus together with the definition of $a_l$ in \eqref{def_al} there holds
	\begin{equation}\label{eq_helper_10}
		\kl{ \Ajk^\dagger \vpjk }_{l+(g-1)L} = (a_l)_{\jk,g} \,.
	\end{equation} 
Hence, we now have to show that
	\begin{equation*}
		\pjklg =  (a_l)_{\jk,g}
		\,, \qquad \forall \, \jk \in \Z \,, \, 1\leq l\leq L \,, \, 1\leq g\leq G \,.
	\end{equation*}		
To do so, note that in \eqref{eq_AD_al} in the proof of Lemma~\ref{lem_AD_welldef} we saw that
	\begin{equation*}
		\phi_l = (\AD \vphi)_l = \Ft_l^* a_l \,, 
	\end{equation*}
where $\Ft_{l}$ is the frame operator of the dual frame $\Kl{\wjklgt}_{\jk \in \Z\,, g=1,\dots,G}$. Hence, we obtain
	\begin{equation*}
		F_l \pl = F_l (\AD \vphi)_l = F_l \Ft_l^* a_l = P_l a_l = a_l \,,
	\end{equation*}
where we used the fact (see \cite{Daubechies_1992}) that $F_l \Ft^*_l = P_l$, with $P_l$ denoting the orthogonal projector in $\lt$ onto $R(\Ft_l) = R(F_l)$, and that by assumption $a_l \in R(F_l)$. However, since this implies that 
	\begin{equation*}
		\pjklg
		= \spr{\pl,\wjklg}_\LtOl
		=
		\kl{F_l \pl}_{jk,g} = \kl{a_l}_{jk,g}   \,,
	\end{equation*}
it now follows that \eqref{eq_helper_8} holds, and thus $\AD \vphi$ is a solution of $A\phi = \vphi$. Together with \eqref{eq_helper_6}, this also implies the coefficient inequality \eqref{eq_phi_min}. Note that due to Lemma~\ref{lem_AD_welldef}, $\AD \vphi$ is well-defined since $\vphi \in \LtOA^G$. Finally, since $\pjklg = \spr{\pl,\wjklg}_\LtOl = \kl{a_l}_{jk,g}$, it follows that the expansion~\ref{def_AD} is the most economical one in the sense of Proposition~\ref{prop_frame_min}, which concludes the proof.
\end{proof}

\begin{remark}
In the above theorem we saw that $\AD \vphi$ is a solution of the atmospheric tomography problem given that $a_l \in R(F_l)$, which can be interpreted as a consistency condition. On the other hand, if instead we only assume that $A \phi = \vphi$ is solvable, then it follows from \eqref{eq_residual_equivalence} that for any solution $\phi$ there holds $\Ajk \pjk = \vpjk$, and thus that
	\begin{equation*}
		P_{N(\Ajk)^\perp} \pjk = \Ajk^\dagger \vpjk \,.
	\end{equation*}
Since the complete set of solutions is given by $\phiD + N(A)$, where $\phiD \in N(A)^\perp$ denotes the minimum-norm solution of $A\phi = \vphi$, and since by \eqref{eq_residual_equivalence} for any $\tilde{\phi} \in N(A)$ there holds $\tilde{\phi}_{jk} \in N(\Ajk)$, it follows that
	\begin{equation*}
		\Ajk^\dagger \vpjk = P_{N(\Ajk)^\perp} \phiD_{jk} \,. 
	\end{equation*}	
Noting that this can be rewritten as
	\begin{equation}\label{eq_HELPER}
		\Ajk^\dagger \vpjk 
		= \phiD_{jk} + \kl{P_{N(\Ajk)^\perp} - I } \phiD_{jk} 
		= \phiD_{jk} - P_{N(\Ajk)} \phiD_{jk}\,, 
	\end{equation}
and since due to \eqref{eq_AD_al} and \eqref{eq_helper_10} there holds
	\begin{equation*}
		(\AD \vphi)_l = \Ft_l^* \Kl{\kl{\Ajk^\dagger \vpjk }_{l+(g-1)L} }_{\jk\in\Z\,, g=1,\dots,G} \,,
	\end{equation*}
it now follows from \eqref{eq_HELPER} that
	\begin{equation*}
		\AD \vphi = \phiD + \kl{ \Ft_l^* b_l}_{l=1}^L \,,
		\qquad
		\text{where} 
		\qquad
		b_l := \Kl{\kl{P_{N(\Ajk)} \phiD_{jk}}_{l+(g-1)L} }_{\jk\in\Z\,, g=1,\dots,G} \,,
	\end{equation*}
which implies that if $a_l \notin R(F_l)$ one can still consider $\AD \vphi$ as an approximate solution.
\end{remark}
\vspace{2pt}

\begin{remark}
The explicit representation of $\AD$ given in \eqref{eq_AD_expl} can be used as the basis of an efficient numerical routine for (approximately) solving the atmospheric tomography problem $A \phi = \vphi$. Replacing the infinite sum over the indices $j,k$ by a finite sum, all one needs for implementation are the vectors $\vpjk$ and the evaluation of the functions $\wjklgt(x,y)$ on a predetermined grid. For this, note that since the functions $\{\wjklgt\}_{\jk\in\Z,g=1,\dots,G}$ form the dual frame of the functions $\{\wjklg\}_{\jk\in\Z,g=1,\dots,G}$, they can be efficiently numerically approximated via the approximation formula \eqref{eq_dual_N_approx}. Furthermore, the coefficients $\vpjk$ can be efficiently computed via \eqref{eq_vphi_dec} using the Fourier transform. Thus, since apart from $\vpjk$ all other quantities can be precomputed independently of the input data $\vphi$, the computation of $\AD \vphi$ via formula \eqref{eq_AD_expl} can be efficiently numerically implemented.
\end{remark}
\vspace{2pt}

\begin{remark}
In Lemma~\ref{lem_AD_welldef}, we have derived that $\norm{\AD} \leq 1/L$, i.e., that $\AD$ is bounded. In particular, for any noisy measurement $\vphi^\delta$ of the incoming wavefronts $\vphi$ this implies
	\begin{equation*}
		\norm{\AD \vphi - \AD \vphi^\delta}_{\D(A)} \leq 1/L \norm{\vphi - \vphi^\delta}_{\LtOA^G} \,,
	\end{equation*}
which shows that the (approximate) solution of the atmospheric tomography problem via the application of the operator $\AD$ is stable with respect to noise in the data. 

At first, this result seems counter-intuitive, since the atmospheric tomography operator is derived from  the Radon transform under the assumption of a layered atmosphere. Hence, since inverting the Radon transform is known to be an ill-posed problem, one expects $\AD$ to become unbounded with an increasing number of layers $L$. However, note that on the one hand $\AD y$ is only an approximate solution for $a_l \notin R(F_l)$, and on the other hand, for $L$ going to infinity, the atmospheric tomography operator $A$ as defined in \eqref{def_A} does not tend towards the (limited-angle) Radon transform. 

To see this, note that the sum in the definition of $A$ stems from the discretization of the line integrals of the Radon transform via the simple quadrature rule 
	\begin{equation*}
		\int\limits_0^1 f(x) \,dx \approx \sum\limits_{k=1}^L f(x_k) (x_k - x_{k-1} ) \,,
	\end{equation*}
where the weights $(x_k - x_{k-1})$ were dropped. Assuming that $(x_k - x_{k-1}) = 1/L$, which would correspond to equidistant atmospheric layers, the above quadrature rule becomes 
	\begin{equation*}
		\int\limits_0^1 f(x) \,dx \approx \frac{1}{L} \sum\limits_{k=1}^L f(x_k)  \,,
	\end{equation*}
which indicates that unless the atmospheric tomography operator is multiplied by $1/L$ in this case, it does not converge to the desired limit as the number of layers tends to infinity. Incidentally, if $A$ is replaced by $(1/L)A$, then $\AD$ has to be replaced by $L \AD$, and it then follows from \eqref{eq_bound_AD} that
	\begin{equation}
		\norm{L \AD } = L \norm{\AD } \leq \sqrt{L} \,,
	\end{equation}
resulting in an upper bound for the solution operator $L \AD$ which tends towards infinity for an increasing number of layers $L$, as one would expected. 

Note that further adaptations to the tomography operator $A$ would have to be made in order to ensure its convergence to the Radon transform in the limit for $L \to \infty$. These correspond to the integration weights having to be adapted to different guide star directions. However, since this is practically not relevant as $L$ is generally fixed, we do not go into further details here. In summary, we want to emphasize that the upper bound on $\AD$ does not contradict the ill-posedness of the general tomography problem.
\end{remark}
\vspace{2pt}

\begin{remark}
Equation \eqref{eq_phi_min} implies that $\AD \vphi$ can be seen as the \emph{minimum-coefficient} solution of $A \phi = \vphi$. Since the sets $\Kl{\wjklg}_{\jk \in \Z,g=1,\dots,G}$ do not form tight frames over $\LtOl$, $\AD \vphi$ is not necessarily also the minimum-norm solution of the equation. 

One way to obtain a minimum-norm solution is to take $\Ml$ piecewise disjoint domains $\Olm \subset \Ol$ satisfying
	\begin{equation*}
		\Ol = \bigcup\limits_{m=1}^{\Ml} \Olm \,,
		\qquad \text{and} \qquad
		\forall \, l,g \, \, \, \exists \, \Nlg \subset \Kl{1,\dots,\Ml}
		: \,
		\clg \OA + \aghl
		= \bigcup\limits_{m \in \Nlg} \Olm
		 \,,
	\end{equation*}
and to use, instead of $\Kl{\wjklg}_{\jk\in\Z,g=1,\dots,G}$, the set of functions
	\begin{equation*}
		\Kl{\wjklgm}_{\jk\in\Z,\,g=1,\dots,G, \,m=1,\dots,\Ml} \,,
	\end{equation*}
where
	\begin{equation*}
		\wjklgm(x,y) := \clg^{-1} \wjk((x,y)/\clg) I_{\Olm}(x,y)\,.
	\end{equation*}
Similarly as in the proof of Proposition~\ref{prop_frame_wjklg}, it can be shown that for fixed $l$, these functions form a tight frame with frame bound $G$ for $\LtOl$. Hence, in complete analogy to \eqref{eq_exp_pl_lg}, it is possible to decompose $\pl$ as follows:
	\begin{equation}\label{eq_exp_pl_lm}
		\pl(x,y) =
		\frac{1}{G}
		\sum\limits_{g=1}^{G}
		\sum\limits_{m=1}^{\Ml}
		\sum\limits_{\jk \in \Z}
		\pjklgm \wjklgm(x,y) \,,
		\qquad
		\pjklgm = \spr{\pl,\wjklgm}_\LtOl \,.
	\end{equation}
Following the same steps as in the proof of Proposition~\ref{prop_Apg}, one finds that
	\begin{equation*}
		\spr{(A \phi)_g,\wjk}_\LtOA = (2T) \sum\limits_{l=1}^{L} \clg^{-1}\wjk(\aghl/ \clg)  \sum\limits_{m\in \Nlg}\pjklgm \,,
	\end{equation*}
from which it follows that
	\begin{equation*}
		(A \phi)_g(x,y) = \sum\limits_{\jk\in \Z} \kl{ (2T) \sum\limits_{l=1}^{L} \clg^{-1}\wjk(\aghl/ \clg)  \sum\limits_{m\in \Nlg}\pjklgm } \wjk(x,y) \,.
	\end{equation*}
Defining (again with a small abuse of notation) the vectors
	\begin{equation}
	\begin{split}
		\pjk &:= ( \pjkg)_{g=1,\dots,G} \in \C^{G\cdot L\,\cdot M_1\cdot,\dots,\cdot M_L} \,,
		\\
		\pjkg &:=  \kl{ \kl{\phi_{jk,1gm}}_{m=1}^{M_1}
		, \dots , \kl{\phi_{jk,Lgm}}_{m=1}^{M_L} }  \in \C^{L\,\cdot M_1\cdot,\dots,\cdot M_L} \,, 
	\end{split}
	\end{equation}
we get, in analogy to \eqref{eq_exp_A_svd_mat}, that $A$ can be written as
	\begin{equation}
		(A \phi)(x,y)
		=
		\sum\limits_{\jk \in \Z}
		\kl{\Ajk \pjk }\wjk(x,y) \,,
	\end{equation}
but now with
	\begin{equation*}
		\Ajk = 
		\begin{pmatrix}
		A_{\jk,1} & 0 & \cdots & 0 
		\\
		0 & A_{\jk,2} & \ddots & \vdots
		\\
		\vdots & \ddots & \ddots & 0
		\\
		0 & \cdots & 0 & A_{\jk,G} 
		\end{pmatrix}
		\in \C^{G \times G\,\cdot L\,\cdot M_1\cdot,\dots,\cdot M_L} \,,
	\end{equation*}
where
	\begin{equation*}
	\begin{split}
		(\Ajkg) = 
		(2T) 
		\begin{pmatrix}
		c_{1,g}^{-1}\wjk(\alpha_g h_1/ c_{1,g}) \kl{I_{m \in N_{1,g}}}_{m=1}^{M_1}
		& 
		\cdots 
		&
		c_{L,g}^{-1}\wjk(\alpha_g h_L/ c_{L,g})  \kl{I_{m \in N_{1,g}}}_{m=1}^{M_L}
		\end{pmatrix}
		\\
		\in \C^{1\times L\,\cdot M_1\cdot,\dots,\cdot M_L} \,,
	\end{split}
	\end{equation*}
and
	\begin{equation*}
		I_{m\in\Nlg} :=
		\begin{cases}
			1 & m \in \Nlg \,, 
			\\
			0 & \text{else} \,.
		\end{cases}
	\end{equation*}
Denoting again by $(\sjkn,\ujkn,\vjkn)_{n=1}^{\rjk}$ the singular value decompositions of the matrices $\Ajk$, we arrive at the same decomposition of $A$ as in \eqref{eq_dec_A_svd}, but obviously with the new singular values and functions. All results derived above hold analogously for this new decomposition, with the difference that, due to the tightness of the frame, $\AD \vphi$ is then not only the  minimum-coefficient solution of $A \phi = \vphi$, but also the minimum-norm solution. Furthermore, one can again derive the singular values of $\Ajk$ explicitly, namely
	\begin{equation*}
		\sjkn = \sqrt{\sum\limits_{l=1}^{L}c_{l,n}^{-2}\sum\limits_{m=1}^{\Ml} I_{m\in N_{l,n}} } 
		=
		\sqrt{\sum\limits_{l=1}^{L}c_{l,n}^{-2}\abs{N_{l,n}} }\,,
	\end{equation*}
In addition, as in \ref{lem_AD_welldef} one can again show that $\AD \vphi$ is well-defined and that 
	\begin{equation*}
		\norm{\AD} \leq \kl{\min\limits_{g=1,\dots,G} \Kl{ \sjkg}	 }^{-1}
		= \kl{\min\limits_{g=1,\dots,G} \Kl{    \sum\limits_{l=1}^{L}c_{l,n}^{-2}\abs{N_{l,n}}  } }^{-\frac{1}{2}} \,.
	\end{equation*}

However, note that using this approach, the number of subdomains $\Olm$ can increase strongly with the number of guide stars $G$. Furthermore, note that the number of frames per layer considered in the two approaches presented in this section are directly related to $G$ and $\Ml$, respectively. Hence, since the used frames have discontinuities, using the approach based on the frames $\{\wjklgm\}$ potentially introduces more discontinuities into the solution in a numerical implementation than the approach based on the frames $\{\wjklg\}$, which is not necessarily desirable in practice.
\end{remark}
\vspace{2pt}

\begin{remark}
In some situations, it is desirable to, instead of the standard $\Lt$ inner product \eqref{def_spr}, work with the weighted inner product
	\begin{equation}\label{def_spr_weight}
		\spr{\phi,\psi} := \sum\limits_{l=1}^{L} \frac{1}{\gamma_l} \spr{\phi_l,\psi_l}_\LtOl \,,
	\end{equation}
where $\Kl{\gamma_l}_{l=1}^L$ denotes a normalized, nonzero sequence of weights. Although we do not focus on this issue in our current investigation, it should be noted that adapting the frame decomposition to include this weighted inner product should be straightforward.
\end{remark}

\section{Numerical Illustrations}\label{sect_numerical_results}

In this section, we present some numerical illustrations showcasing the building blocks of the frame decomposition of the atmospheric tomography operator derived above, namely the frame functions $\wjklg$ and their corresponding dual frame functions $\wjklgt$. While the functions $\wjklg$ have been defined explicitly and thus can be easily visualized, this is not the case for the functions $\wjklgt$, which are only implicitly defined. However, since they are the foundation of the frame decomposition, it is of interest and importance to get some idea about their visual representation, which we aim to do in this section.

It was already noted above that the dual frame functions $\wjklgt$ can be numerically approximated via formula \eqref{eq_dual_N_approx}, and that this iterative process can be implemented efficiently using the Fourier transform. However, the functions $\wjklgt$ are problem-adapted, i.e., they are dependent on the concrete parameter setting of the considered atmospheric tomography problem, which we thus first need to decide upon. Hence, for our purposes, we take the same setup as in \citeNR, which in turn is adapted from the MAORY setup \cite{MAORY_2020, Diolaiti_et_al_2016} as planned for the ELT currently being built by ESO. More precisely, we consider a telescope with a diameter of $42$ m (the originally planned ELT size), which leads to a circular aperture domain $\OA$, as well as $6$ NGS positioned uniformly on a circle with a diameter of $1$ arcmin, with the corresponding directions $\ag$ given in the following table ($\rho=0.000290888$, $a=\rho\cdot 0.5$, $b=\rho\cdot 0.866025$):
	\begin{align*}
	\begin{array}{|l|c|c|c|c|c|c|} 
		\hline
		g & 1 & 2 & 3 & 4 & 5 & 6 
		\\ \hline
		\ag & (\rho,0) & (a,b)& (-a,b) & (-\rho,0) & (-a,-b) & (a,-b) 
		\\ \hline
	\end{array}
	\end{align*}
	
For defining the layer profile we choose the ESO Standard Atmosphere, which consists of 9 layers located at the heights $h_l$ given in the following table: 
	\begin{equation*}
	\begin{array}{|c|c|c|c|c|c|c|c|c|c|} 
		\hline
		l & 1 & 2 & 3 & 4 & 5 & 6 & 7 & 8 & 9 
		\\ \hline
		h_l\text{(m)} & 0 & 140 & 281 & 562 & 1125 & 2250 & 4500 & 9000 & 18000 
		\\ \hline
	\end{array} 
	\end{equation*}

For the numerical implementation, and in particular for the approximation of the dual frame functions $\wjklgt$ via formula \eqref{eq_dual_N_approx}, which in our present case reads as
	\begin{equation}\label{eq_approx_wjklgt}
		\wjklgt^N =
		\frac{2}{1+G} \wjklg + \wjklgt ^{N-1} - \frac{2}{1+G} \sum\limits_{pq\in \Z} \sum\limits_{r=1}^G \spr{\wjklgt ^{N-1},w_{pq,lr}}_\LtOl w_{pq,lr} \,,
	\end{equation}
one can only consider a finite number of indices $j,k$ (and thus also of $p,q$). Hence, for our numerical illustrations, we restricted ourselves to $j,k,p,q \in \Kl{-63,\dots,63}$, which is motivated by the fact that the wavefront sensors of MAORY cannot reconstruct higher frequencies which might be present in the wavefronts anyway. For computing the dual frame functions $\wjklgt$ depicted below, we implemented the iterative procedure \eqref{eq_approx_wjklgt} in Matlab (R2019a) on a desktop computer running on a Mac OS with an 8 core processor (Intel Xeon W-2140B CPU@3.20GHz) and 32GB RAM. The iteration itself was stopped after $500$ iterations for each of the functions, which is more than sufficient as suggested by the error estimate \eqref{eq_dual_approx_error} and the fact that already after much fewer iterations the magnitudes of the updates become almost negligible. The computation time for a single dual frame function was more than two hours, although this can be significantly reduced to a couple of seconds by more sophisticated implementations (currently under development). Note again that the computation of the dual frame functions has to be done only once for each AO setting and can be carried out in advance of the actual atmospheric tomography reconstruction.

The resulting frame and dual frame functions $\wjklg$ and $\wjklgt$, for different values of $j,k,l,g$ are depicted in Figure~\ref{fig_adj_frame_layer_center} and Figure~\ref{fig_adj_frame_layer_border}, which show absolute value plots of the real part of those functions in linear scale and (for the dual frame functions) also in logarithmic scale. While the frame functions depicted in Figure~\ref{fig_adj_frame_layer_center} are living on the same layer ($l=2$) and differ only in the guide star direction ($g = 2$ and $g=5$), the frame functions depicted in Figure~\ref{fig_adj_frame_layer_border} vary also in the other parameters. In particular, they are living on atmospheric layers far apart from each other ($l=2$ and $l=9$). One can clearly  (especially from the log-plots) see that even though by definition the frame functions $\wjklg$ have local support, namely exactly the domains $\OAaghl$, the resulting dual frame functions $\wjklgt$ spread over the whole domain $\Ol$, which is due to the fact that all the frame functions $\wjklg$, also for all other values of $g$, contribute to their definition. The influence of all the different guide star directions on each of the dual frame functions $\wjklgt$ is also apparent in the appearance of ring-like structures corresponding to the the domains $\OAaghl$. Furthermore, the nicely visible periodicity of the frame functions $\wjklg$ due to their definition involving an exponential function manifests itself in a periodic pattern of the dual frame functions $\wjklgt$.

\begin{figure}
	\centering
	\includegraphics[width=\textwidth, trim = {12cm 20cm 10cm 18cm}, clip]{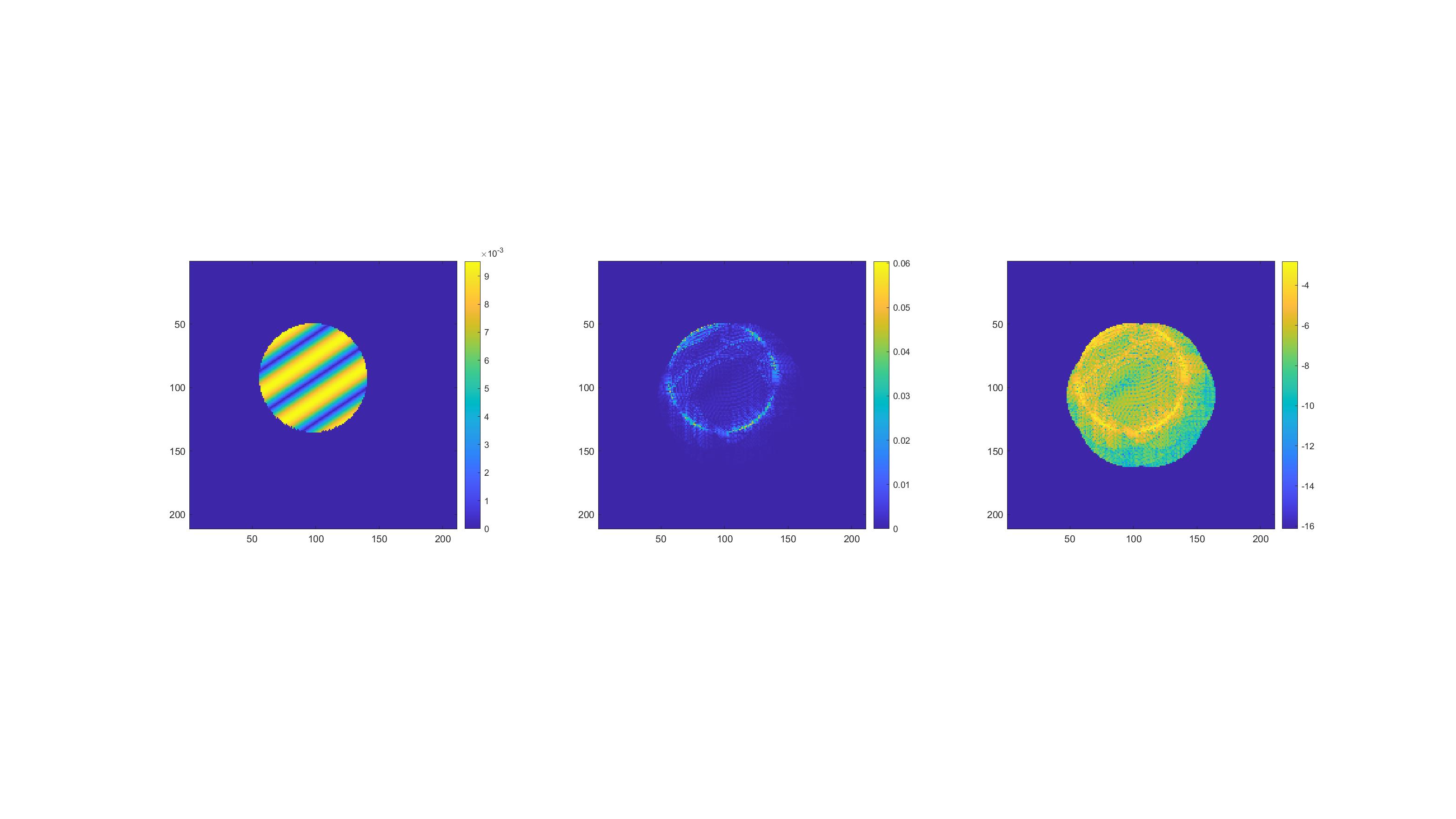}
	\\ \vspace{5pt}
	\includegraphics[width=\textwidth, trim = {12cm 20cm 10cm 18cm}, clip]{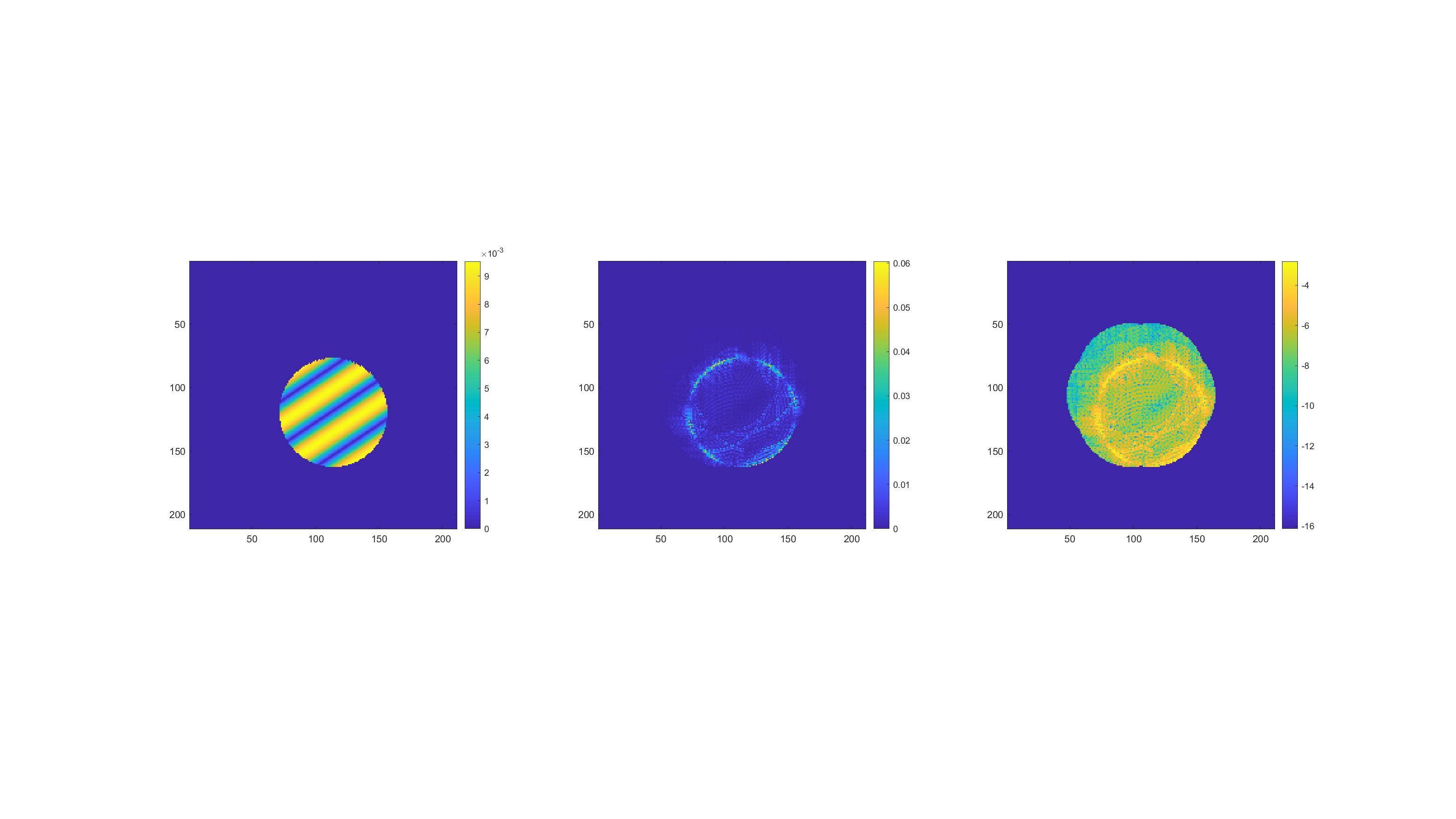}
	\caption{Frame functions $\wjklg$ (left) and dual frame functions $\wjklgt$ (middle, right) for $j = 3$, $k = 2$, $l=8$, with $g=2$ (top) and $g=5$ (bottom), respectively. Plotted is the absolute value of the real part in linear (left, middle) and logarithmic scale (right).}
	\label{fig_adj_frame_layer_center}
\end{figure}

\begin{figure}
	\centering
	\includegraphics[width=\textwidth, trim = {12cm 20cm 10cm 18cm}, clip]{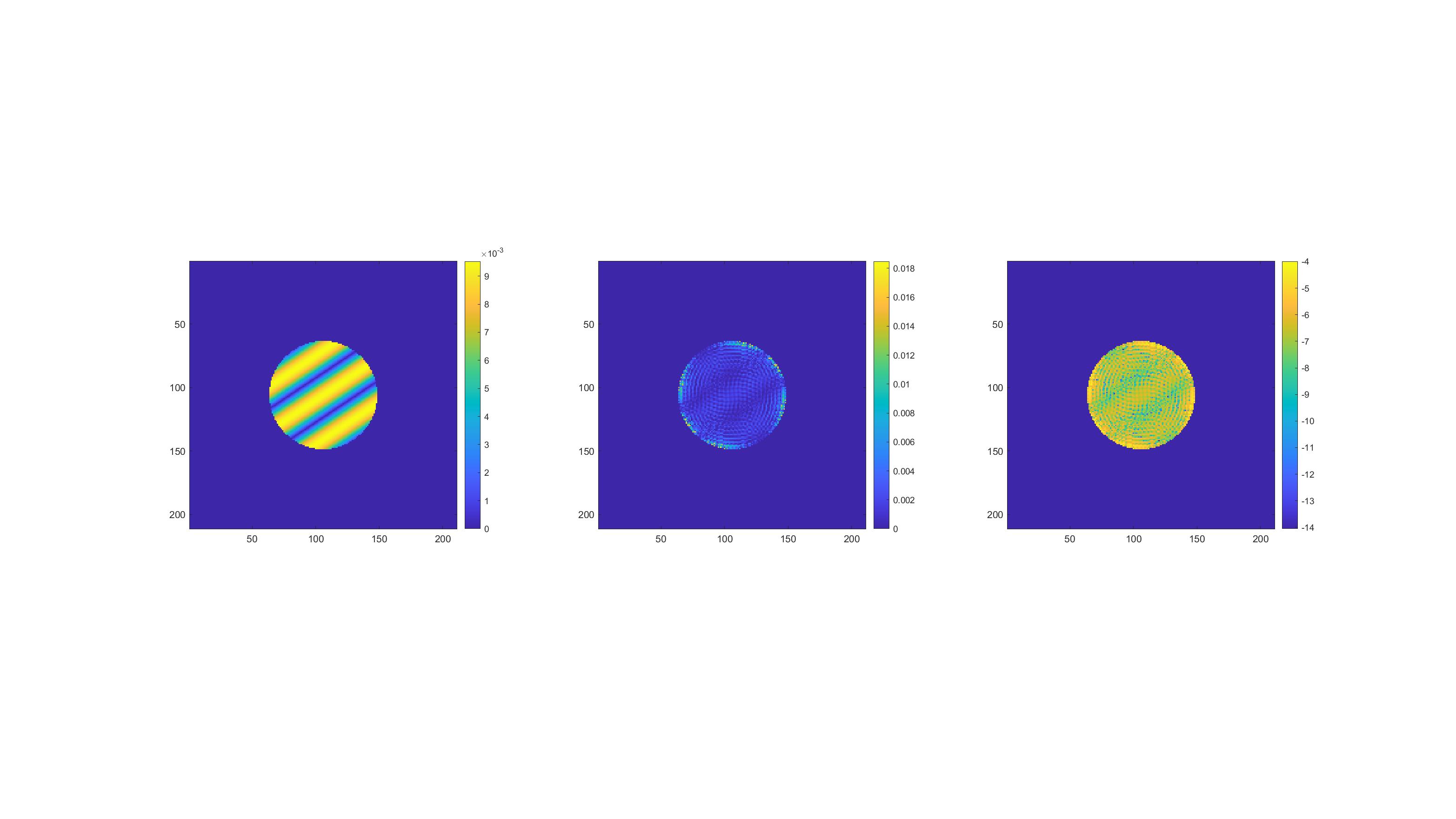}
	\\ \vspace{5pt}
	\includegraphics[width=\textwidth, trim = {12cm 20cm 10cm 18cm}, clip]{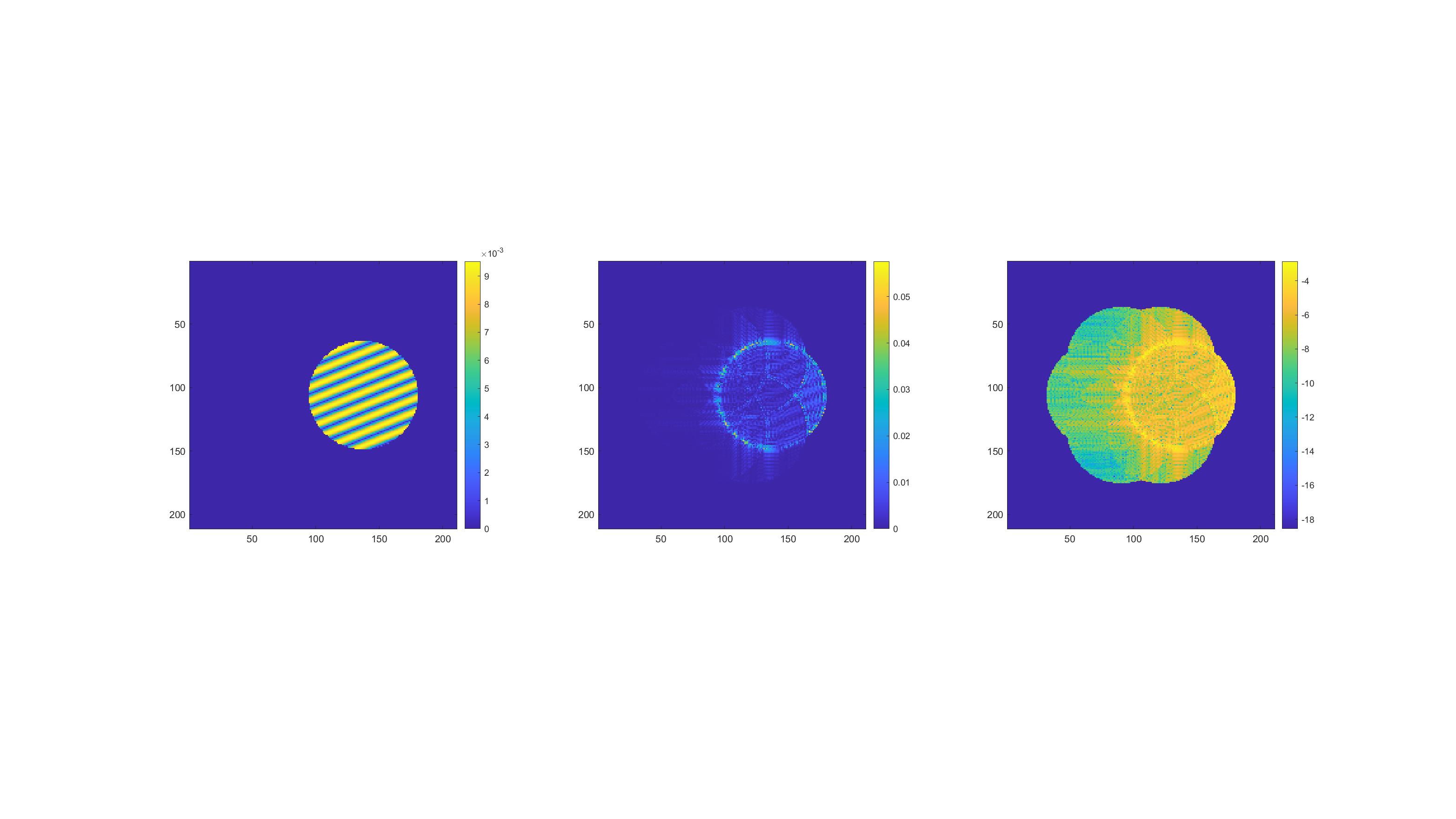}	
	\caption{Frame functions $\wjklg$ (left) and dual frame functions $\wjklgt$ (middle, right) for $j = 3$, $k = 2$, $l=2$, $g=2$ (top) and $j = 10$, $k = 4$, $l=9$, $g=4$ (bottom). Plotted is the absolute value of the real part in linear (left, middle) and logarithmic scale (right).}
	\label{fig_adj_frame_layer_border}
\end{figure}

\section{Conclusion and Outlook}\label{sect_conclusion}

In this paper, we considered the problem of atmospheric tomography by analysing the underlying tomography operator, and derived a frame decomposition for different problem settings. In particular, in contrast to \citeNR, we considered settings with a mixture of both natural and laser guide stars, and did not place any restrictions on the shape of the aperture shape. The resulting decomposition yields information on the behaviour of the operator and provides a stable reconstruction algorithm for obtaining a minimum-coefficient least-squares solution of the atmospheric tomography problem. Furthermore, we presented numerical illustrations showcasing some of the building blocks of the frame decomposition. In a forthcoming publication currently under preparation, we plan to present numerical reconstruction results from both simulated and experimental AO data. For this, we are in particular developing efficient, accurate, and stable numerical methods for the computation of the dual frame functions.

\section{Support}

S. Hubmer and R. Ramlau were (partly) funded by the Austrian Science Fund (FWF): F6805-N36. The authors would like to thank Dr.\ Stefan Kindermann for valuable discussions on some theoretical questions which arose during the writing of this manuscript. 

\bibliographystyle{plain}
{\footnotesize
\bibliography{mybib}

\begin{thebibliography}{10}

\bibitem{Andersen_Eikenberry_Fletcher_Gardhuose_Leckie_Veran_Gavel_Clare_Jolissaint_Julian_Rambold_2006}
D.~R. Andersen, S.~S. Eikenberry, M.~Fletcher, W.~Gardhuose, B.~Leckie, J.-P.
  Veran, D.~Gavel, R.~Clare, R.~G.~L. Jolissaint, R.~Julian, and W.~Rambold.
\newblock The {MOAO} system of the {IRMOS} near-infrared {M}ulti-{O}bject
  {S}pectrograph for {TMT}.
\newblock {\em Proceedings of the {SPIE}}, 6269, 2006.

\bibitem{Auzinger_2017}
G.~Auzinger.
\newblock {\em {N}ew {R}econstruction {A}pproaches in {A}daptive {O}ptics for
  {E}xtremely {L}arge {T}elescopes}.
\newblock PhD thesis, Johannes Kepler University Linz, 2017.

\bibitem{Daubechies_1992}
I.~Daubechies.
\newblock {\em Ten Lectures on Wavelets}.
\newblock Society for Industrial and Applied Mathematics, 1992.

\bibitem{Davison_1983}
M.~Davison.
\newblock {T}he ill-conditioned nature of the limited angle tomography problem.
\newblock {\em SIAM J. Appl. Math.}, 43:428--448, 1938.

\bibitem{Diolaiti_et_al_2016}
E.~Diolaiti, P.~Ciliegi, R.~Abicca, G.~Agapito, C.~Arcidiacono, A.~Baruffolo,
  M.~Bellazzini, V.~Biliotti, M.~Bonaglia, G.~Bregoli, R.~Briguglio,
  O.~Brissaud, L.~Busoni, L.~Carbonaro, A.~Carlotti, E.~Cascone, J.-J. Correia,
  F.~Cortecchia, G.~Cosentino, V.~De Caprio, M.~de~Pascale, A.~De Rosa, C.~Del
  Vecchio, A.~Delboulbé, G.~Di Rico, S.~Esposito, D.~Fantinel, P.~Feautrier,
  C.~Felini, D.~Ferruzzi, L.~Fini, G.~Fiorentino, I.~Foppiani, M.~Ghigo,
  C.~Giordano, E.~Giro, L.~Gluck, F.~Hénault, L.~Jocou, F.~Kerber, P.~La
  Penna, S.~Lafrasse, M.~Lauria, E.~le~Coarer, M.~Le Louarn, M.~Lombini,
  Y.~Magnard, E.~Maiorano, F.~Mannucci, M.~Mapelli, E.~Marchetti, D.~Maurel,
  L.~Michaud, G.~Morgante, T.~Moulin, S.~Oberti, G.~Pareschi, M.~Patti,
  A.~Puglisi, P.~Rabou, R.~Ragazzoni, S.~Ramsay, A.~Riccardi, S.~Ricciardi,
  M.~Riva, S.~Rochat, F.~Roussel, A.~Roux, B.~Salasnich, P.~Saracco,
  L.~Schreiber, M.~Spavone, E.~Stadler, M.-H. Sztefek, N.~Ventura,
  C.~Vérinaud, M.~Xompero, A.~Fontana, and F.~M. Zerbi.
\newblock {MAORY: adaptive optics module for the E-ELT}.
\newblock In Enrico Marchetti, Laird~M. Close, and Jean-Pierre Véran, editors,
  {\em Adaptive Optics Systems V}, volume 9909, pages 768 -- 774. International
  Society for Optics and Photonics, SPIE, 2016.

\bibitem{Egner_2006}
S.~E. Egner.
\newblock {\em {Multi-Conjugate Adaptive Optics for LINC-NIRVANA : Laboratory
  tests of a Ground-Layer Adaptive Optics System and Vertical Turbulence
  Measurements at Mt. Graham.}}
\newblock PhD thesis, Combined Faculties for the Natural Sciences and for
  Mathematics of the Ruperto-Carola University of Heidelberg, 2006.

\bibitem{Ellerbroek_Vogel_2009}
B.~L. Ellerbroeck and C.~R. Vogel.
\newblock Inverse problems in astronomical optics.
\newblock {\em Inverse Problems}, 25:063001 (37pp), 2009.

\bibitem{Ellerbroek_Gilles_Vogel_2002}
B.~Ellerbroek, L.~Gilles, and C.R. Vogel.
\newblock {A Computationally Efficient Wavefront Reconstructor for Simulation
  or Multi-Conjugate Adaptive Optics on Giant Telescopes}.
\newblock {\em Proc. SPIE}, 4839, 2002.

\bibitem{Ellerbroek_Gilles_Vogel_2003}
B.~L. Ellerbroek, L.~Gilles, and C.~R. Vogel.
\newblock Numerical simulations of multiconjugate adaptive optics wavefront
  reconstruction on giant telescopes.
\newblock {\em Appl. Optics}, 42:4811--4818, 2003.

\bibitem{Engl_Hanke_Neubauer_1996}
H.~W. {Engl}, M.~{Hanke}, and A.~{Neubauer}.
\newblock {\em {Regularization of inverse problems.}}
\newblock Dordrecht: Kluwer Academic Publishers, 1996.

\bibitem{ELT_2020}
{European Southern Observatory (ESO)}.
\newblock {ESO's Extremely Large Telescope}.
\newblock \url{https://www.eso.org/public/teles-instr/elt/}.
\newblock Accessed: 2020-05-13.

\bibitem{MAORY_2020}
{European Southern Observatory (ESO)}.
\newblock {MAORY Multi-conjugate Adaptive Optics RelaY for the ELT}.
\newblock \url{https://www.eso.org/public/teles-instr/elt/elt-instr/maory/}.
\newblock Accessed: 2020-05-13.

\bibitem{Fusco_Conan_Rousset_Mugnier_Michau_2001}
T.~Fusco, J.-M. Conan, G.~Rousset, L.~M. Mugnier, and V.~Michau.
\newblock Optimal wave-front reconstruction strategies for multi conjugate
  adaptive optics.
\newblock {\em J. Opt. Soc. Am. A}, 18:2527--2538, 2001.

\bibitem{Gavel_2004}
D.~Gavel.
\newblock Tomography for multiconjugate adaptive optics systems using laser
  guide stars.
\newblock In {\em Astronomical Telescopes and Instrumentation, no. 5490 in
  Proc. SPIE}, pages 1356--1373, 2004.

\bibitem{Gilles_Ellerbroeck_2008}
L.~Gilles and B.~L. Ellerbroeck.
\newblock Split atmospheric tomography using laser and natural guide stars.
\newblock {\em J. Opt. Soc. Am.}, 25:2427--2435, 2008.

\bibitem{Gilles_Ellerbroek_Vogel_2007}
L.~Gilles, B.~Ellerbroek, and C.~Vogel.
\newblock A comparison of {M}ultigrid {V}-cycle versus {F}ourier {D}omain
  {P}reconditioning for {L}aser {G}uide {S}tar {A}tmospheric {T}omography.
\newblock In {\em {Adaptive Optics: Analysis and Methods/Computational Optical
  Sensing and Imaging/Information Photonics/Signal Recovery and Synthesis
  Topical Meetings on CD-ROM, OSA Technical Digest (CD)}}. Optical Society of
  America, 2007.

\bibitem{Gilles_Ellerbroek_Vogel_2002}
L.~Gilles, B.~Ellerbroek, and C.R. Vogel.
\newblock {Layer-Oriented Multigrid Wavefront Reconstruction Algorithms for
  Multi-Conjugate Adaptive Optics}.
\newblock {\em Proc. SPIE}, 4839, 2002.

\bibitem{Gilles_Ellerbroek_Vogel_2003}
L.~Gilles, B.~L. Ellerbroek, and C.~R. Vogel.
\newblock Preconditioned conjugate gradient wave-front reconstructors for
  multiconjugate adaptive optics.
\newblock {\em Applied Optics}, 42(26):5233--5250, 2003.

\bibitem{Hammer_Sayede_Gendron_Fusco_Burgarella_Cayatte_Conan_Courbin_Flores_Guinouard_2002}
F.~Hammer, F.~Say\`{e}de, E.~Gendron, T.~Fusco, D.~Burgarella, V.~Cayatte,
  J.-M. Conan, F.~Courbin, H.~Flores, I.~Guinouard, et~al.
\newblock The {FALCON} concept: Multi-object spectroscopy combined with {MCAO}
  in near-{IR}.
\newblock {\em Scientific Drivers for {ESO} Future {VLT}/{VLTI} Instrumentation
  {ESO} Astrophysics Symposia}, pages 139--148, 2002.

\bibitem{Helin_Yudytskiy_2013}
T.~Helin and M.~Yudytskiy.
\newblock Wavelet methods in multi-conjugate adaptive optics.
\newblock {\em Inverse Problems}, 29:085003 (18pp), 2013.

\bibitem{Natterer_2001}
F.~{Natterer}.
\newblock {\em {The Mathematics of Computerized Tomography}}.
\newblock Society for Industrial and Applied Mathematics, 2001.

\bibitem{Neubauer_Ramlau_2017}
A.~Neubauer and R.~Ramlau.
\newblock A singular-value-type decomposition for the atmospheric tomography
  operator.
\newblock {\em SIAM Journal on Applied Mathematics}, 77(3):838--853, 2017.

\bibitem{Poettinger_Ramlau_Auzinger_2019}
M.~Poettinger, R.~Ramlau, and G.~Auzinger.
\newblock A new temporal control approach for {SCAO} systems.
\newblock {\em Inverse Problems}, 36:015002, 2019.

\bibitem{Puech_Flores_Lehnert_Neichel_Fusco_Rosati_Cuby_Rousset_2008}
M.~Puech, H.~Flores, M.~Lehnert, B.~Neichel, T.~Fusco, P.~Rosati, J.-G. Cuby,
  and G.~Rousset.
\newblock Coupling {MOAO} with integral field spectroscopy: specifications for
  the {VLT} and the {E-ELT}.
\newblock {\em Mon. Not. R. Astron. Soc.}, 390:1089--1104, 2008.

\bibitem{Raffetseder_Ramlau_Yudytskiy_2016}
S.~Raffetseder, R.~Ramlau, and M.~Yudytskiy.
\newblock Optimal mirror deformation for multi conjugate adaptive optics
  systems.
\newblock {\em Inverse Problems}, 32(2):025009, 2016.

\bibitem{Ramlau_Obereder_Rosensteiner_Saxenhuber_2014}
R.~Ramlau, A.~Obereder, M.~Rosensteiner, and D.~Saxenhuber.
\newblock Efficient iterative tip/tilt reconstruction for atmospheric
  tomography.
\newblock {\em Inverse Problems in Science and Engineering}, 22(8):1345--1366,
  2014.

\bibitem{Ramlau_Rosensteiner_2012}
R.~Ramlau and M.~Rosensteiner.
\newblock An efficient solution to the atmospheric turbulence tomography
  problem using {K}aczmarz iteration.
\newblock {\em Inverse Problems}, 28:095004, 2012.

\bibitem{Rigaut_Ellerbroek_Flicker_2000}
F.~J. Rigaut, B.~L. Ellerbroek, and R.~Flicker.
\newblock Principles, limitations and performance of multiconjugate adaptive
  optics.
\newblock {\em Proc. SPIE}, 4007:1022--1031, 2000.

\bibitem{Roddier_1999}
F.~Roddier.
\newblock {\em Adaptive optics in astronomy}.
\newblock Cambridge University Press, 1999.

\bibitem{Roggemann_Welsh_1996}
M.~C. Roggemann and B.~Welsh.
\newblock {\em Imaging through turbulence}.
\newblock CRC Press laser and optical science and technology series, CRC Press,
  1996.

\bibitem{Rosensteiner_Ramlau_2013}
M.~Rosensteiner and R.~Ramlau.
\newblock The {K}aczmarz algorithm for multi-conjugate adaptive optics with
  laser guide stars.
\newblock {\em J. Opt. Soc. Am.}, 30:1680--1686, 2013.

\bibitem{Saxenhuber_Ramlau_2016}
D.~Saxenhuber and R.~Ramlau.
\newblock A gradient-based method for atmospheric tomography.
\newblock {\em Inverse Problems and Imaging}, 10(3):781--805, 2016.

\bibitem{Tallon_Bechet_TallonBosc_Louarn_Thiebaut_Clare_Marchetti_2012}
M.~Tallon, B{\'e}chet, I.~Tallon-Bosc, M.~L. Louarn, E.~Thi{\'e}baut, R.~Clare,
  and E.~Marchetti.
\newblock Performances of {MCAO} on the {E-ELT} using the fractal iterative
  method for fast atmospheric tomography.
\newblock {\em Adapt. Optics for ELTs II}, 2012.

\bibitem{Tallon_TallonBosc_Bechet_Momey_Fradin_Thiebaut_2010}
M.~Tallon, I.~Tallon-Bosc, C.~B{\'e}chet, F.~Momey, M.~Fradin, and
  E.~Thi{\'e}baut.
\newblock Fractal iterative method for fast atmospheric tomography on extremely
  large telescopes.
\newblock In {\em Adaptive Optics Systems II, no.~7736 in Proc. SPIE}, pages
  77360X--77360X--10, 2010.

\bibitem{Thiebaut_Tallon_2010}
E.~Thi{\'e}baut and M.Tallon.
\newblock Fast minimum variance wavefront reconstruction for extremely large
  telescopes.
\newblock {\em J. opt. Soc. Am. A}, 27:1046--1059, 2010.

\bibitem{Vogel_Yang_2006}
C.~Vogel and Q.~Yang.
\newblock Fast optimal wavefront reconstruction for multi-conjugate adaptive
  optics using the {F}ourier domain preconditioned conjugate gradient
  algorithm.
\newblock {\em Optics Express}, 15:7487--7498, 2006.

\bibitem{Yang_Vogel_Ellerbroek_2006}
Q.~Yang, C.~R. Voge, and B.~L. Ellerbroek.
\newblock Fourier domain preconditioned conjugate gradient algorithm for
  atmospheric tomography.
\newblock {\em Applied Optics}, 45:5281--5293, 2006.

\bibitem{Yudytskiy_2014}
M.~Yudytskiy.
\newblock {\em Wavelet methods in adaptive optics}.
\newblock Phd thesis, Johannes Kepler University Linz, Austria, 2014.

\bibitem{Yudytskiy_Helin_Ramlau_2013}
M.~Yudytskiy, T.~Helin, and R.~Ramlau.
\newblock A frequency dependent preconditioned wavelet method for atmospheric
  tomography.
\newblock In {\em Proceedings of the Third AO4ELT Conference, S.~Esposito and
  L.~Fini, eds., Firenze}, 2013.

\bibitem{Yudytskiy_Helin_Ramlau_2014}
M.~Yudytskiy, T.~Helin, and R.~Ramlau.
\newblock Finite element-wavelet hybrid algorithm for atmospheric tomography.
\newblock {\em J. Opt. Soc. Am. A}, 31:550--560, 2014.

\end{thebibliography}
}

\end{document}